\title{Integrality and Thurston Rigidity for Bicritical PCF Polynomials}
\author{Heidi Benham}
\address{Department of Mathematics,
         Western Oregon University,
         Monmouth OR 97361}
\email{hbenham17@wou.edu}
\author{Alexander Galarraga}
\address{Department of Mathematics,
         University of Washington,
         Seattle, WA, 98195}
\email{agalar@uw.edu}
\author{Benjamin Hutz}
\address{Department of Mathematics and Statistics,
         Saint Louis University,
         St.~Louis, MO 63103}
\email{benjamin.hutz@slu.edu}
\author{Joey Lupo}
\address{Department of Mathematics and Statistics,
         Amherst College,
         Amherst, MA 01002}
\email{jlupo20@amherst.edu}
\author{Wayne Peng}
\address{Department of Mathematics,
        University of Rochester
        Rochester, NY 14627}
\email{jpeng4@ur.rochester.edu}
\author{Adam Towsley}
\address{School of Mathematical Sciences,
        Rochester Institute of Technology
        Rochester, NY 14623}
\email{adtsma@rit.edu}
\subjclass[2010]{
37P05, %Polynomial and rational maps
37P15  %Global ground fields
(primary);
37P45, %Families and moduli spaces
(secondary)}
\keywords{dynamical systems, Thurston Transversality, post-critically finite, bicritical, polynomial}
\newtheorem{thm}{Theorem}
\newtheorem*{thm*}{Theorem}
\newtheorem{lem}[thm]{Lemma}
\newtheorem{prop}[thm]{Proposition}
\newtheorem*{prop*}{Proposition}
\newtheorem*{conj*}{Conjecture}
\newtheorem{conj}[thm]{Conjecture}
\theoremstyle{definition}
\newtheorem{defn}[thm]{Definition}
\newtheorem{ex}[thm]{Example}
\def\QQ{\mathbb{Q}}
\def\PP{\mathbb{P}}
\def\cP{\mathcal{P}}
\def\cM{\mathcal{M}}
\DeclareMathOperator{\PGL}{PGL}
\thanks{The authors thank the Institute for Computational and Experimental Research in Mathematics, where most of this work was completed during the Summer 2019 Research Experience for Undergraduates. Thanks also to the anonymous referees for their detailed comments which improved this work.}
\begin{document}

\maketitle

\definecolor{codegreen}{rgb}{0,0.6,0}
\definecolor{codegray}{rgb}{0.5,0.5,0.5}
\definecolor{codepurple}{rgb}{0.58,0,0.82}
\definecolor{backcolour}{rgb}{0.95,0.95,0.92}
\lstdefinestyle{mystyle}{
   backgroundcolor=\color{backcolour},
   commentstyle=\color{codegreen},
   keywordstyle=\color{magenta},
   numberstyle=\tiny\color{codegray},
   stringstyle=\color{codepurple},
   basicstyle=\ttfamily\footnotesize,
   breakatwhitespace=false,
   breaklines=true,
   captionpos=b,
   keepspaces=true,
   numbers=left,
   numbersep=5pt,
   showspaces=false,
   showstringspaces=false,
   showtabs=false,
   tabsize=2
}
\lstset{style=mystyle}

\begin{abstract}
    We give an algebraic proof of an important consequence of Thurston rigidity for bicritical PCF polynomials with periodic critical points under certain mild assumptions. The key result is that when the family of bicritical polynomials is parametrized using dynamical Belyi polynomials, the PCF solutions are integral at certain special primes, which we term ``index divisor free primes.'' We prove the existence of index divisor free primes in all but finitely many cases and conjecture the complete list of exceptions. These primes are then used to prove transversality.
\end{abstract}

Let $f(z) \in \QQ(z)$ be a rational function of degree $d\geq2$, considered as an endomorphism of $\PP^1$. Define the $n$\emph{-th iterate} of $f$ recursively as $f^n(z) = f(f^{n-1}(z))$, with $f^0(z) = z$. There is a natural conjugation action on $f$ by $\alpha \in \PGL_2$ given by $f^{\alpha} = \alpha^{-1} \circ f \circ \alpha$. Since the dynamical behavior of $f$ is preserved by this conjugation action, we may consider the set of equivalence classes of degree $d$ rational endomorphisms of $\PP^1$ under $\PGL_2$ conjugation. We denote this moduli space as $\cM_d$, and denote by $\cP_d \subset \cM_d$ the moduli space of degree $d$ polynomials \cite{Silverman, Silverman-transversality}. %section 4.4, introduction
 We denote the conjugacy class in $\cM_d$ represented by the map $f$ as $[f]$.

A \emph{critical point of $f$} is a point with ramification index at least $2$. When the forward orbits of all the critical points are finite, we say the map is \emph{post-critically finite (PCF)}.

One can construct a new moduli space $\cP^{crit}_d$ by marking the critical points of a polynomial. That is, $\cP^{crit}_d$ is equivalence classes of sets of tuples of the form $(f, c_1, \ldots, c_{d-1})$, where $c_1, \ldots, c_{d-1}$ are critical points of the polynomial $f$, each appearing with the appropriate multiplicity. See \cite{Silverman-transversality}, or alternatively \cite[Section 1.5]{PolyPairsBook}, for details. If we require that the critical points are periodic with periods $n_1, \ldots, n_{d-1}$ (a special case of $f$ being PCF) we get a subvariety of $\cP^{crit}_d$. One consequence of Thurston's rigidity theorem \cite{Thurston} is that any two such subvarieties intersect transversely in $\cP^{crit}_d$. The earliest transversality result is perhaps due to Gleason on the family of quadratic polynomials and published by Douady-Hubbard \cite{DH} stating that the roots of $f_c^n(0)$ are simple for $f_c(z) = z^2+c$. For other proofs of transversality for these types of critical orbit relations see for example \cite{BE, LSS}. Favre and Gauthier use this transversality to prove that the PCF parameters equidistribute in the moduli space $\cP_d$ \cite{Favre-Gauthier}. Note that in the cases where Theorem \ref{thm_transversality} holds, our results should allow for a similar equidistribution statement.

Thurston's proof of rigidity relies on complex analytic techniques, which do not generalize nicely. As such, there is interest in arithmetic proofs. Some results include Hutz-Towsley's proof for unicritical polynomials \cite{Hutz-Towsley}, Silverman's proof for degree three polynomials  \cite{Silverman-transversality}, and Epstein's proof for polynomials of degree $p^n$ \cite{Epstein-transversality}. There is also an unpublished work of Levy \cite{Levy} giving an algebraic proof for a certain class of rational maps. Furthermore, a recent preprint of Ji and Xie \cite{JX} gives a new method of proof that does not rely on Teichm\"uller theory for a version of rigidity due to McMullen, which is based on Thurston's rigidity.

We give an algebraic proof of Thurston rigidity for the case of bicritical polynomials, that is, polynomials with two affine critical points. We restrict our attention to $\mathcal{B}_{d,k}^{crit}$, which we define to be the subset of $\cP^{crit}_d$ of equivalence classes of the form $(f, c_1, \ldots, c_1, c_2, \ldots, c_2)$, where $c_1$ occurs $k$ times.

\begin{thm} \label{thm_transversality}
    For integers $n, m \geq 1$ let
    \begin{align*}
        C_{d,0,n} &= \{ (f, c_1, \ldots, c_1, c_2, \ldots, c_2) \in B^{crit}_{d,k} \;\vert\; c_1 \text{ is periodic with } f^n(c_1) = c_1\} \\
        C_{d,1,m} &= \{ (f, c_1, \ldots, c_1, c_2, \ldots, c_2) \in B^{crit}_{d,k} \;\vert\; c_2 \text{ is periodic with } f^m(c_2) = c_2\}.
    \end{align*}
    With the exception of finitely many pairs $(d,k)$, the curves $C_{d,0,m}$ and $C_{d,1,n}$ intersect transversely.
\end{thm}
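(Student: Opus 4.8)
The plan is to use the dynamical Belyi normal form to turn the two critical-orbit relations into explicit polynomial equations in two moduli parameters, to recognize transversality as the non-vanishing of the associated $2\times 2$ Jacobian, and then to certify that non-vanishing by reducing modulo an index divisor free prime.

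First I would fix the Belyi parametrization. After affine conjugation one may place the two critical points at $c_1 = 0$ and $c_2 = 1$ and normalize the leading coefficient, so that $f'(z) = A\, z^k (z-1)^{d-1-k}$ for a parameter $A$ and $f(0) = B$; this identifies $\mathcal{B}^{crit}_{d,k}$ with the two-parameter family $f = f_{A,B}$, which is the expected dimension since fixing the positions of $c_1,c_2$ exhausts the affine conjugation. The defining relations then become genuinely explicit: $C_{d,0,n}$ is cut out by $\Phi_{n}(A,B) := f^{n}_{A,B}(0) = 0$ and $C_{d,1,m}$ by $\Psi_{m}(A,B) := f^{m}_{A,B}(1) - 1 = 0$.

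Second, I would reduce the theorem to a single non-vanishing statement: the curves $\Phi_n = 0$ and $\Psi_m = 0$ meet transversely at a common point $P$ exactly when
$$J(P) \;=\; \partial_A \Phi_n \,\partial_B \Psi_m \;-\; \partial_B \Phi_n \,\partial_A \Psi_m$$
is nonzero at $P$. To compute the entries I would differentiate along the two critical orbits: writing $z_i = f^i(0)$, the recursion $\partial z_{i+1} = f'(z_i)\,\partial z_i + (\partial f)(z_i)$ together with $f'(0)=0$ (and the analogous recursion for the orbit of $1$) collapses many terms, leaving each partial as a sum over the orbit weighted by the multiplier products $\prod_j f'(z_j)$. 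Everything then comes down to showing $J(P)\neq 0$ at every intersection point.

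Third comes the arithmetic step. By the existence result I may select an index divisor free prime $p$, available for all but the finitely many excluded pairs $(d,k)$, and by the integrality result every coordinate of every point of $C_{d,0,n} \cap C_{d,1,m}$ lies in $\ZZ_p$. I would therefore reduce the whole configuration modulo a prime above $p$. Because $p$ is index divisor free, the reduction does not collapse the critical data: $\overline{c_1}$ and $\overline{c_2}$ remain distinct and the ramification indices $k+1$ and $d-k$ persist, so $\overline{f}$ is an honest bicritical polynomial of degree $d$ over $\overline{\mathbb{F}}_p$. It then suffices to prove $J(P) \not\equiv 0 \pmod p$, since $J(P)$ is $p$-integral and its reduction being nonzero forces $J(P)\neq 0$ in characteristic zero.

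The main obstacle is exactly this last mod-$p$ non-vanishing, and in particular the off-diagonal coupling $\partial_B \Phi_n\,\partial_A \Psi_m$ between the two orbits: reduction could in principle manufacture a spurious tangency ($J \equiv 0 \pmod p$) even when the curves are transverse over $\Qbar$. Ruling this out is precisely the role of the index divisor free hypothesis, and I expect the decisive computation to express $J$ (or the obstructing factor of it) as a resultant of the two iterate polynomials and to show that this resultant is a $p$-adic unit exactly when $p$ avoids the index divisors. Pinning down that identification, controlling the multiplier products along both orbits after reduction, and separately disposing of the finitely many $(d,k)$ for which no index divisor free prime exists, is where the real difficulty lies.
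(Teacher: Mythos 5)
Your overall architecture matches the paper's: the Belyi normal form with marked critical points $0,1$ and two parameters, the equations $F_n(a,c)=f^n_{a,c}(0)$ and $G_m(a,c)=f^m_{a,c}(1)-1$, transversality recast as non-vanishing of the $2\times 2$ Jacobian, integrality of the intersection points at an IDF prime, and reduction modulo $p$ to certify $J\neq 0$. But there is a genuine gap exactly at the step you yourself flag as the ``main obstacle,'' and your guess for how to close it points in the wrong direction. You assert that because $p$ is index divisor free, ``the reduction does not collapse the critical data'' and that $\overline{f}$ is ``an honest bicritical polynomial of degree $d$ over $\overline{\mathbb{F}}_p$.'' This is false, and the opposite phenomenon is the entire point of IDF primes: such a $p$ is (almost always) a prime of \emph{bad} reduction, and the first two conditions in the definition force every coefficient $b_i$ of $\mathcal{B}_{d,k}$ except $b_r$ to be divisible by $p$, so that
\begin{equation*}
\overline{f}_{a,c}(z) \equiv a s z^{tp} + c \pmod{p},
\end{equation*}
a monomial plus a constant of degree $tp = d-r$ divisible by $p$. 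The critical structure collapses completely, by design.

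That degeneration is what makes the Jacobian computable, and no resultant identity is needed. Because the exponent $tp$ is divisible by $p$, every chain-rule term in the partial derivatives of the iterates carries a factor $tp\equiv 0$, so the recursion you write for $\partial z_{i+1}$ telescopes trivially:
\begin{align*}
(F_n)_a \equiv s\bigl(\overline{f}^{\,n-1}_{a,c}(0)\bigr)^{tp}, \quad (F_n)_c \equiv 1, \quad
(G_m)_a \equiv s\bigl(\overline{f}^{\,m-1}_{a,c}(1)\bigr)^{tp}, \quad (G_m)_c \equiv 1 \pmod{p},
\end{align*}
whence $J \equiv s\bigl(\overline{f}^{\,m-1}_{a,c}(1)^{tp} - \overline{f}^{\,n-1}_{a,c}(0)^{tp}\bigr)$. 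At an intersection point $(\alpha,\beta)$ the relations $f^n_{\alpha,\beta}(0)=0$ and $f^m_{\alpha,\beta}(1)=1$ let you solve for both powers, giving $J(\alpha,\beta) \equiv 1/\overline{\alpha} \pmod{p}$; this requires the additional fact, a separate lemma in the paper that your outline omits, that $\overline{\alpha}\not\equiv 0 \pmod p$ at every such solution. If the reduction really were a bicritical degree-$d$ polynomial as you claim, the multiplier products $\prod_j f'(z_j)$ in your expansion would not vanish or simplify modulo $p$, the off-diagonal coupling you worry about would genuinely survive, and the argument would stall; it is precisely the monomial degeneration at an IDF prime that dissolves this obstacle.
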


Unfortunately, the methods used do not allow us to explicitly describe the finite exceptions $(d,k)$. We conjecture a list of all exceptions in Conjecture \ref{conj:goodprime}.

Our method of proof follows the general plan of attack used by both Silverman and Epstein: find a prime at which we can reduce and demonstrate that the Jacobian does not vanish at a point of intersection. In both Epstein and Silverman, finding a prime such that the points of intersection are integral is simple: take the prime dividing the degree. Integrality in our case turns out to be more complicated and requires careful selection of the prime. %alex: changes made here as I realized some of what was written incorrectly characterized Epstein's work
Overall, the details of our proof are of a similar nature to Epstein's proof in that we carefully analyze the valuation of the forward orbit of the critical points to show that the critical points must be $p$-adically integral. Finally, we calculate a specific Jacobian modulo $p$ to show it is non-zero.

When compared to previous results, our result is more general in that it holds for polynomials of any degree. Our result is also interesting in that we reduce modulo a prime which is (almost always) a prime of bad reduction and the prime varies depending on the map being considered. However, our results are limited in that we only consider bicritical polynomials, as we rely on dynamical Belyi polynomials, and Belyi polynomials always have two affine critical points. Both Silverman and Epstein work with monic centered form, and as Epstein notes in \cite{Epstein-transversality}, monic centered form is not sufficient in the non-prime power degree case as the PCF solutions are not necessarily $p$-adically integral. Our use of dynamical Belyi polynomials to parameterize bicritical polynomials avoids this issue, as we prove the following key proposition.

\begin{prop}\label{prop:main}
Let $f_{a,c}(z) = a \mathcal{B}_{d,k}(z) + c$, where $\mathcal{B}_{d,k}(z)$ is a normalized Belyi polynomial. With finitely many exceptions $(d,k)$, there exists a prime $p$ such that if $f_{\alpha,\beta}(z)$ is PCF with periodic critical points, then
\begin{align*}
    v_p(\alpha) = 0 \quad v_p(\beta) \geq 0
\end{align*}
where $v_p$ is the normalized $p$-adic valuation.
\end{prop}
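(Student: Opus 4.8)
The plan is to work entirely $p$-adically at an index divisor free prime $p$, whose existence for all but finitely many pairs $(d,k)$ is guaranteed by the results of the previous section. The point of choosing such a prime is that the normalized Belyi polynomial $\mathcal{B}_{d,k}$ then has $p$-integral coefficients and a leading coefficient that is a $p$-adic unit, so that the leading coefficient of $f_{\alpha,\beta}$ is $\alpha$ times a unit and its valuation equals $v_p(\alpha)$. The elementary engine of the argument is leading-term domination: if $w$ is any point with $v_p(w) < 0$, then the top-degree term of $\mathcal{B}_{d,k}(w)$ strictly dominates all lower terms, so $v_p(\mathcal{B}_{d,k}(w)) = d\,v_p(w)$ and hence $v_p(f_{\alpha,\beta}(w)) = v_p(\alpha) + d\,v_p(w)$ whenever this does not exceed $v_p(\beta)$. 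I would record this as a short lemma at the outset.

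First I would run a minimal-valuation argument on the combined critical orbit, which is finite because both critical points are periodic. Let $V$ be the minimum of $v_p$ over this orbit and suppose $V < 0$, attained at a point $w$. Since $f_{\alpha,\beta}(w)$ again lies in the orbit, its valuation is at least $V$; comparing this with the domination estimate (and separately comparing against $v_p(\beta)$, which is itself at least $V$ as $\beta$ is a critical value in the orbit) forces $v_p(\alpha) \geq (d-1)\abs{V} > 0$ in either case. Equivalently, \emph{unless $v_p(\alpha)$ is strictly positive, the entire critical orbit is $p$-integral}. This immediately rules out $v_p(\alpha) < 0$: in that case every orbit point, and in particular the two critical values $\beta$ and $\alpha+\beta$, must be integral, whence $\alpha = (\alpha+\beta) - \beta$ is integral, contradicting $v_p(\alpha) < 0$. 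The same dichotomy shows that once $v_p(\alpha) = 0$ is known we must have $V \geq 0$, and therefore $v_p(\beta) \geq 0$, which is the second half of the conclusion.

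It remains to exclude $v_p(\alpha) > 0$, and I expect this to be the main obstacle. When the critical orbit is $p$-integral this is quick: reducing modulo $p$ collapses $f_{\alpha,\beta}$ to the constant map $z \mapsto \bar\beta$, so the reductions of the two periodic critical points must both equal the unique fixed value $\bar\beta$, forcing the two distinct affine critical points of $\mathcal{B}_{d,k}$ to become congruent modulo $p$, which is impossible. The delicate regime is $v_p(\alpha) > 0$ together with a critical orbit that is \emph{not} $p$-integral, i.e.\ the case $v_p(\alpha) \geq (d-1)\abs{V}$ above. Here I would propagate the valuation around each cycle: because the cycle of a critical point returns to that (integral) point, the valuation must climb from its negative minimum back to a nonnegative value, and analyzing the single step $f_{\alpha,\beta}(u) = c_i$ landing on the critical point expresses the valuation of the corresponding critical value in terms of $v_p(\alpha)$ and $v_p(\mathcal{B}_{d,k}(u))$. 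Combining the two relations coming from the two cycles with the identity $\alpha = f_{\alpha,\beta}(c_2) - f_{\alpha,\beta}(c_1)$ should be incompatible with $v_p(\alpha) > 0$; making this quantitative, uniformly in the period lengths $n$ and $m$, is the technical heart of the proof and is precisely where the fine arithmetic of the index divisor free prime enters. Once $v_p(\alpha) > 0$ is eliminated we conclude $v_p(\alpha) = 0$, and then the minimal-valuation dichotomy gives $v_p(\beta) \geq 0$, completing the proof.
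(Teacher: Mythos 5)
Your ``elementary engine'' rests on a false claim. At an IDF prime $p$ for $(d,k)$, the coefficient of $\mathcal{B}_{d,k}$ that is a $p$-adic unit is $b_r$, the coefficient of $z^{d-r}$, \emph{not} the leading coefficient: since $p \mid d-r$ and the factor $d-r$ occurs in $\prod_{j\neq i}(d-j)$ for every $i\neq r$, every coefficient except $b_r$ is divisible by $p$, and $r$ need not be $0$. Concretely, $(d,k)=(7,2)$ admits the IDF prime $p=5$ with $r=2$, and the leading coefficient $\tfrac{1}{2}(d-1)(d-2)=15$ is divisible by $5$. So leading-term domination fails; for $v_p(w)<0$ one only gets the two-term bound
\begin{equation*}
v_p\bigl(f_{\alpha,\beta}(w)\bigr) \;\geq\; \min\bigl\{\,v_p(\alpha)+v_p(d-r)+d\,v_p(w),\;\; v_p(\alpha)+(d-r)\,v_p(w),\;\; v_p(\beta)\,\bigr\},
\end{equation*}
with equality exactly when the minimum is unique, and the entire purpose of the IDF condition $r \nmid v_p(d-r)$ is to guarantee that the first two terms can never coincide (this is the paper's Lemma \ref{lem:valuation_of_image}). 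Your minimal-valuation dichotomy, the exclusion of $v_p(\alpha)<0$ via $\alpha=f_{\alpha,\beta}(1)-f_{\alpha,\beta}(0)$, and the constant-map reduction when $v_p(\alpha)>0$ and the orbit is integral can all be repaired to run on this corrected bound (and are in fact tidier than the paper's Cases 1--3), so this error is reparable; but it shows the role you assign to the IDF hypothesis is misplaced. Note also that the existence of an IDF prime outside finitely many $(d,k)$ is itself half the content of the proposition and requires real input (Zsigmondy, Laishram--Shorey, Filaseta--Laishram--Saradha in the paper's Section \ref{idfprimes}); it cannot simply be cited.

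The fatal gap is the regime you yourself defer: $v_p(\alpha)>0$ with $v_p(\beta)<0$. Writing ``should be incompatible'' is not a proof, and this is precisely the hardest part of the theorem (the paper's Case 4). The subtle subcase is $\min\{v_p(\alpha)+v_p(d-r)+dv_p(\beta),\ v_p(\alpha)+(d-r)v_p(\beta)\}=v_p(\beta)$: there the valuation estimates give only $v_p(f^n_{\alpha,\beta}(0))\geq v_p(\beta)$ for all $n$, nothing descends, and cycle-by-cycle bookkeeping of the kind you propose produces no contradiction, since the inequalities are lower bounds and are consistent with the orbit valuation returning to $\infty$. The paper's resolution genuinely couples the two critical orbits: it writes $f_{\alpha,\beta}(1)=\beta+\alpha\mathcal{B}_{d,k}(1)$ as a perturbation of $f_{\alpha,\beta}(0)=\beta$ and proves (Lemma \ref{lem:fxy}) a decomposition $f^n_{\alpha,\beta}(X+Y)=f^n_{\alpha,\beta}(X)+h_n(X,Y)$ in which the perturbation retains valuation at least $v_p(\alpha)$ under iteration. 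Evaluating at $X=\beta$, $Y=\alpha\mathcal{B}_{d,k}(1)$ and using $f^{n-1}_{\alpha,\beta}(\beta)=f^{n}_{\alpha,\beta}(0)=0$ yields
\begin{equation*}
0 \;=\; v_p\bigl(f^{n}_{\alpha,\beta}(1)\bigr) \;=\; v_p\bigl(h_{n-1}(\beta,\alpha\mathcal{B}_{d,k}(1))\bigr) \;\geq\; v_p(\alpha) \;>\; 0,
\end{equation*}
a contradiction. This coupling idea is absent from your sketch, so the proposal does not prove the proposition.
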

\noindent Proposition \ref{prop:main} allows for the reduction of PCF polynomials in Belyi normal form with periodic critical points modulo a nice prime $p$.

The prime appearing in Proposition ~\ref{prop:main} is from a special class of primes which we have not seen in the literature, which we term \emph{index divisor free primes}, or IDF primes. Given a tuple $(d,k)$ where $k \leq \left\lceil \frac{d-2}{2} \right\rceil$, we define an IDF prime for $(d,k)$ to be a prime $p$ such that
\begin{itemize}
    \item $p$ is greater than $k$.
    \item $p$ divides $d-r$ for some $r$ less than or equal to $k$.
    \item $r \nmid v_p(d-r)$, where $v_p$ is the normalized $p$-adic valuation.
\end{itemize}
We prove that except for finitely many explicitly computable tuples $(d,k)$, there exists an IDF prime for $(d,k)$. Our results inspire the following conjecture:

\begin{conj}\label{conj:goodprime}
Except when $(d,k)$ equals $(27, 3)$, there exists an index divisor free prime for $(d,k)$
\end{conj}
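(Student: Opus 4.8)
The plan is to reformulate the non-existence of an IDF prime as an explicit system of multiplicative constraints on consecutive integers, show the system forces $d$ into a bounded range, and finish by computation. For a prime $p>k$ dividing $d-r$, the condition $r\nmid v_p(d-r)$ is exactly what makes $p$ an IDF prime through the index $r$, and since $r=1$ is vacuous ($1$ divides everything), a pair $(d,k)$ has \emph{no} IDF prime precisely when, for every $r$ with $2\le r\le k$, the $k$-rough part of $d-r$, namely $\prod_{p>k}p^{v_p(d-r)}$, is a perfect $r$-th power. Thus the whole question is to prove that this system of $k-1$ simultaneous ``power'' conditions on $d-2,d-3,\dots,d-k$ has no solution once $d$ is not too small, apart from $(27,3)$. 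The method needs at least two usable indices, so it concerns $k\ge 3$; the cases $k\le 2$ offer only the single square condition on $d-2$ and are handled separately.

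First I would dispose of the regime where $k$ is large relative to $d$. If $(k+1)^2>d$, then no nontrivial $r$-th power supported on primes $>k$ can be as small as $d-r$, so for each $r$ any prime $p>k$ dividing $d-r$ satisfies $v_p(d-r)=1$; hence the index $r$ produces an IDF prime exactly when $d-r$ is not $k$-smooth. Because $k\le\lceil(d-2)/2\rceil$ forces $d-k>k$, the integers $d-2,\dots,d-k$ lie in $(k,d]$, so any prime among them is automatically a first-power prime factor $>k$. For $k$ near its maximum the interval $[d-k,d-2]$ has length $\asymp d/2$ and contains a prime by Bertrand's postulate (or sharper prime-gap bounds) for all $d$ past a small explicit threshold, settling this range.

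The heart of the matter is bounded $k$, where I would exploit the two smallest indices at once: failure forces $d-2=s_2X^2$ and $d-3=s_3Y^3$, with $s_2,s_3$ being $k$-smooth and $X,Y$ built from primes $>k$. Subtracting yields the Thue--Mahler/Pillai-type relation
\[
s_2X^2-s_3Y^3=1,
\]
and indeed the exceptional pair $(27,3)$ is nothing but the solution $25-24=1$ with $s_2=1,X=5,s_3=24,Y=1$. For each admissible pair of smooth coefficients this equation has effectively finitely many solutions by Baker's method and the theory of $S$-unit equations, giving an explicit bound $d\le D(k)$; combined with the large-$k$ step this produces an absolute threshold $D_0$, after which a finite computation over the remaining pairs should isolate $(27,3)$.

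The principal obstacle is quantitative rather than structural, and splits in two. At the crossover $k\asymp\sqrt d$ the two regimes must meet: the interval $[d-k,d-2]$ is then only about $\sqrt d$ long, and one no longer has a prime there for free but needs an effective guarantee of an integer with a prime factor $>k$ in a short interval near $d$, a genuinely delicate point. More seriously, the Baker-type bounds governing $s_2X^2-s_3Y^3=1$ are astronomically large and grow with the set of primes $\le k$, so $D_0$ is far too big for the closing computation to be feasible, which is exactly why the assertion remains a conjecture. Overcoming this would require either a sharp, uniform treatment of this family of equations (bringing in all indices $r\le k$ simultaneously, or $abc$/Pillai-type inputs, rather than just $r=2,3$) or a descent collapsing the search to a computable range. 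I expect the true difficulty to concentrate at the smallest values of $k$, especially $k=3$, where only the square and cube conditions are available and the displayed equation must be solved completely and uniformly over all smooth coefficients.
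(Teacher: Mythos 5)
First, a point of framing: the statement you set out to prove is stated as a \emph{conjecture} in the paper, and the paper does not prove it either; Section~\ref{idfprimes} establishes existence of IDF primes only for $k\le 10$ (Lemma~\ref{lem:k<4} and the computations following it) and for $k$ larger than an effectively computable constant $\gamma$ of size roughly $50^{50}$ (Lemma~\ref{lem:k>4}). Your proposal honestly ends in the same unresolved place, so the comparison is between partial strategies. Within that comparison there is one genuine error in yours: your opening reformulation drops the index $r=0$. The definition allows $p\mid d-r$ for any $0\le r\le k$, and $r=0$ is not vacuous: since $0$ divides only $0$, \emph{every} prime $p>k$ dividing $d$ itself is an IDF prime. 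Hence ``no IDF prime for $(d,k)$'' requires, besides your $r$-th power conditions for $2\le r\le k$, that $d$ be $k$-smooth. This omission is fatal to the plan as written. For $k=3$ your equation $s_2X^2-s_3Y^3=1$ has the solution $9-8=1$, i.e.\ $d=11$, where $d-2=3^2$ and $d-3=2^3$ have trivial rough parts; your criterion would therefore declare $(11,3)$ a pair with no IDF prime, i.e.\ a counterexample to the conjecture, when in fact $p=11$ is an IDF prime for $(11,3)$ through $r=0$. The paper's Lemma~\ref{lem:k<4} runs exactly your bounded-$k$ computation --- it rewrites $BY^2=CX^3+1$ as the Mordell equations $y^2=x^3+B^3C^2$ and computes integral points --- and finds six values $d\in\{11,27,51,291,12170,453965\}$; five of the six are then discarded \emph{only} because $d$ has a prime factor greater than $3$. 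Without the $r=0$ filter you do not isolate $(27,3)$; you wrongly refute the conjecture.

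On the large-$k$ side your decomposition also differs from the paper's, to your disadvantage. You split according to the size of $k$ relative to $d$, and your argument closes only when $k$ is of order $d/2$ (Bertrand), leaving the crossover $k\asymp\sqrt d$ open, as you acknowledge. The paper's Lemma~\ref{lem:k>4} splits by the absolute size of $k$ and has no crossover: it considers $\Delta=d(d-2)(d-4)\cdots(d-2\lfloor k/2\rfloor)$, i.e.\ only \emph{even} indices $r$, so that the condition $r\nmid v_p(d-r)$ is implied by $v_p(d-r)$ being odd; the Laishram--Shorey theorem \cite{GreatestPrime} then produces a prime $p>k$ dividing $\Delta$, and Filaseta--Laishram--Saradha \cite{FiniteSolutions} show $\Delta=by^2$ with $b$ smooth is impossible once $k$ exceeds $\gamma$, forcing some odd valuation. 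This covers all $d$ uniformly for $k>\gamma$, with no short-interval input needed. The residual obstruction --- $\gamma$ astronomically large while the per-$k$ computation costs about $6^{\pi(k)}$ Mordell equations --- is the same infeasibility you identify with Baker's bounds, and it is exactly why the statement remains a conjecture rather than a theorem. If you repair the $r=0$ omission, your bounded-$k$ step coincides with the paper's; the even-index parity trick is the idea you are missing for the large-$k$ regime.
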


While IDF primes themselves appear to be unstudied, they are closely related to prime powers in products of consecutive integers, of which there is an extensive literature \cite{ProductNeverPower,FiniteSolutions,GreatestPrime}. The following conjecture, due to Erd\"os and Selfridge \cite{ProductNeverPower}, implies our conjecture when $k$ is greater than or equal to 5 and not a prime.

\begin{conj}[Erd\"os and Selfridge] \label{conj:erdos}
If $k \geq 4$ and $n + k \geq p^{(k)}$, where $p^{(k)}$ is the smallest prime greater than or equal to $k$, then there is a prime greater than $k$ which divides $(n+1)\cdots(n+k)$ to the first power.
\end{conj}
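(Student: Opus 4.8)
\noindent\emph{Proposed approach.} This is the Erd\"os--Selfridge conjecture, which remains open; accordingly I describe the natural line of attack and isolate the step at which it resists an unconditional argument. The first move is to recast the conclusion as a statement about $p$-adic multiplicities. Since $n+1,\dots,n+k$ span an interval of length $k-1$, any prime $p>k$ divides at most one of them, so for such a prime
\[
  v_p\bigl((n+1)\cdots(n+k)\bigr)=v_p(n+i)
\]
for the unique index $i$ (if any) with $p \mid n+i$. Hence a prime $p>k$ divides the product to exactly the first power precisely when $v_p(n+i)=1$ for some $i$. Writing $n+i=A_iB_i$, where $A_i$ collects the prime powers with primes $\le k$ and $B_i$ those with primes $>k$, the conjecture is equivalent to the assertion that at least one of $B_1,\dots,B_k$ fails to be powerful (squarefull, i.e.\ every prime factor occurring to exponent at least $2$). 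I would therefore aim for a contradiction from the hypothesis that \emph{every} $B_i$ is powerful.

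Next I would dispatch the generic regime, where the statement is elementary. Under the hypothesis $n+k \ge p^{(k)}$, the Sylvester--Erd\"os theorem (and its refinements) guarantees that the product has a prime factor $P$ exceeding $k$; and if in addition $P>\sqrt{n+k}$, then $P^2>n+i$ for the single term $P$ divides, forcing $v_P(n+i)=1$ and completing the proof. Since $\sqrt{n+k}>k$ once $n \ge k^2-k$, the conjecture is immediate for $n$ large relative to $k$. Similarly, if the interval $[n+1,n+k]$ contains a prime $q>k$, then its next multiple $2q$ exceeds $n+k$, so $v_q(n+i)=1$ and we are again done. What remains are the delicate cases: intervals lying inside a prime gap of length at least $k$, with $n$ not yet large enough for the $\sqrt{n+k}$ bound to bite.

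The crux --- and the reason the statement is a conjecture rather than a theorem --- is to exclude the ``conspiracy'' in which every rough part $B_i$ is simultaneously powerful. Here I would argue by counting: the number of powerful integers up to $x$ is $O(\sqrt{x})$, and one wants to show that the $k$-smooth cofactors $A_i$ cannot be distributed so as to render all $k$ of the $B_i$ powerful at once. Making this precise requires uniform control on the distribution of powerful numbers in short intervals and of $k$-smooth numbers near $n$, together with the distribution of primes in $[n+1,n+k]$; the currently available sieve and prime-gap estimates appear too weak to rule out the conspiracy uniformly for all admissible $n$. A complete proof would thus seem to demand either a genuine advance on the short-interval behavior of powerful and smooth numbers, or a new combinatorial relation among the $B_i$ forcing one of them to carry a prime $>k$ to the first power.
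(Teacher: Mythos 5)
You are right that this statement is not something the paper proves: it is the Erd\H{o}s--Selfridge conjecture, quoted from their 1975 paper and stated purely as motivation, since it implies the paper's Conjecture~\ref{conj:goodprime} for $k\ge 5$ not prime. The paper offers no argument for it, so there is no internal proof to compare yours against, and your decision to treat it as open rather than manufacture a ``proof'' is the correct call. Your preliminary reductions are also sound: a prime $p>k$ divides at most one term of $(n+1)\cdots(n+k)$, so the conclusion is equivalent to some rough part $B_i$ (the largest divisor of $n+i$ supported on primes $>k$) failing to be powerful.

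One concrete error in your sketch, however: the elementary regime is inverted. Sylvester's theorem supplies a prime $P>k$ dividing the product, and this forces $v_P(n+i)=1$ precisely when $P>\sqrt{n+k}$; that is automatic when $k\ge\sqrt{n+k}$, i.e., when $n\le k^2-k$. So the conjecture is immediate (for $n\ge k$, so that Sylvester applies) when $n$ is \emph{small} relative to $k$, not large. When $n>k^2-k$, knowing only $P>k$ says nothing about $P$ versus $\sqrt{n+k}$, and the known lower bounds on the greatest prime factor of a product of consecutive integers (e.g., the Laishram--Shorey type results the paper cites) grow far too slowly in $n$ to close this gap. Your sentence ``the conjecture is immediate for $n$ large relative to $k$'' therefore has the hard and easy cases exchanged, and the framing of where the ``conspiracy'' must be excluded should be adjusted accordingly: it is the large-$n$ regime, inside long prime gaps, that resists all current techniques.
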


Thus, the existence of an IDF prime can therefore be viewed as a weakening of Conjecture ~\ref{conj:erdos} in most cases.

Finally, as the strongest hypothesis of the main theorem is that the polynomial is bicritical, one might wonder what happens if we try and replace bicritical with $n$-critical for some $n$ greater than 2. As Section \ref{n-critical-points} shows, we can construct a generalization of the dynamical Belyi polynomials for any fixed number of critical points $n$, however, we produce a counterexample to show that the same method of proof is fruitless. Consequently, a general algebraic proof of transversality will require a new method.

This article is organized as follows. Sections 1 and 3 prove the essential ingredients of the proof of Theorem \ref{thm_transversality} under the assumption of the existence of an index-divisor free prime. Section 2 investigates the existence of IDF primes, showing that in all but finitely many cases an IDF prime exists. Finally, Section 4 shows that the natural extension to more than two critical points fails to provide similar results.

\section{Integrality}\label{integrality}

Following the outline for proving rigidity given in Silverman \cite{Silverman-transversality}, we first prove that when the space of bicritical polynomials is appropriately parametrized, every PCF solution is $p$-adically integral. We first parametrize the space of bicritical polynomials using dynamical Belyi polynomials as done in Tobin \cite{Bella}. Throughout let $K$ be a field of characteristic $0$ and $\overline{K}$ an algebraic closure of $K$.

First let us recall a normal form of a single-cycle Belyi map by \cite[Proposition 3.1]{Dynamical}
\begin{equation}\label{eq:Bdk}
    \mathcal{B}_{d,k}(z) := \sum_{i=0}^{k} \frac{(-1)^{k-i}}{(k - i)!i!}  \left(\,\prod_{j = 0, j \neq i}^k d - j\right)z^{d - i}.
\end{equation}

We state some results from Tobin \cite{Bella} which we will use without proof.
\begin{lem}[\textup{\cite[Proposition 4.0.2]{Bella}}] \label{Bella}
Let $g \in K[z]$ be a bicritical polynomial of degree $d \geq 3$. Then $g$ is conjugate to a map $f_{a, c}(z) = a\mathcal{B}_{d,k}(z) + c$, where $\mathcal{B}_{d,k}(z)$ a single-cycle Belyi map and $a, c \in \overline{K}$. Moreover, $f_{a,c}(z)$ has affine critical points $\{0,1\}$.
\end{lem}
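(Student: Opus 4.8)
The plan is to reduce any bicritical polynomial to Belyi normal form by an affine change of coordinates and then to match derivatives. First I would record the shape of the derivative of a bicritical map: since $g \in K[z]$ has degree $d$, its derivative $g'$ has degree $d-1$, and the hypothesis that $g$ is bicritical means $g'$ has exactly two distinct roots $\gamma_1, \gamma_2 \in \overline{K}$, so that $g'(z) = A(z-\gamma_1)^{e_1}(z-\gamma_2)^{e_2}$ with $A \in \overline{K}^\times$, $e_1, e_2 \geq 1$, and $e_1 + e_2 = d-1$. (The point $\infty$ is the remaining, totally ramified, critical point forced on every polynomial.)

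Next I would normalize the two affine critical points to $0$ and $1$. Because $g$ is a polynomial, $\infty$ is its unique totally ramified fixed point, so any conjugation producing another polynomial must fix $\infty$; that is, the conjugating map must be affine. I would therefore take the affine map $\alpha(z) = \gamma_1 + (\gamma_2 - \gamma_1)z \in \PGL_2$, for which $g^\alpha = \alpha^{-1}\circ g \circ \alpha$ has critical points $\alpha^{-1}(\gamma_1) = 0$ and $\alpha^{-1}(\gamma_2) = 1$. After possibly post-composing with the involution $z \mapsto 1-z$ (another affine conjugation, which swaps the two exponents), I may assume the ramification orders are arranged so that $(g^\alpha)'(z) = B\, z^{d-k-1}(z-1)^{k}$ for the value of $k$ falling in the normalized range, with $B \in \overline{K}^\times$.

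The final step is to match $g^\alpha$ against the explicit Belyi polynomial. A direct differentiation of \eqref{eq:Bdk} shows $\mathcal{B}'_{d,k}(z) = (-1)^k d\binom{d-1}{k} z^{d-k-1}(z-1)^{k}$, so $(g^\alpha)'$ and $\mathcal{B}'_{d,k}$ are scalar multiples of one another. Writing $(g^\alpha)' = a\, \mathcal{B}'_{d,k}$ and integrating (using $\mathcal{B}_{d,k}(0) = 0$ to fix the antiderivative), I would obtain $g^\alpha(z) = a\,\mathcal{B}_{d,k}(z) + c$ for some $c \in \overline{K}$ and $a \in \overline{K}^\times$ (nonzero since $g$ has degree $d$). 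This is exactly $f_{a,c}$, and its affine critical points are $\{0,1\}$ by construction, completing the argument.

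I expect the main obstacle to be purely bookkeeping: verifying from the closed form \eqref{eq:Bdk} that $\mathcal{B}'_{d,k}$ is indeed proportional to $z^{d-k-1}(z-1)^{k}$ (equivalently, that $z=1$ is a critical point of the prescribed order), and confirming that the symmetry $z \mapsto 1-z$ together with the range restriction on $k$ picks out a single member of the Belyi family rather than two. Once the derivative computation is in hand, the rest of the argument is forced, since two degree-$d$ polynomials with proportional derivatives can differ only by a scalar factor and an additive constant.
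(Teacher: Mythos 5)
Your proof is correct, but note that the paper itself offers no proof of this lemma to compare against: it is quoted verbatim from Tobin's thesis (\cite[Proposition 4.0.2]{Bella}) and explicitly used without proof. Your argument --- write $g'(z) = A(z-\gamma_1)^{e_1}(z-\gamma_2)^{e_2}$, move the critical points to $0$ and $1$ by an affine conjugation, observe that $\mathcal{B}_{d,k}'(z) = (-1)^k d\binom{d-1}{k}z^{d-k-1}(z-1)^k$ follows from \eqref{eq:Bdk} because $(d-i)\prod_{j=0,\, j\neq i}^{k}(d-j) = \prod_{j=0}^{k}(d-j)$ is independent of $i$ and the remaining sum telescopes to $(1-z)^k/k!$, and then integrate (valid since $\operatorname{char} K = 0$) --- is exactly the method by which such normal forms are derived, and indeed it is the same ``prescribe the derivative and integrate'' argument the paper itself uses to prove the $n$-critical generalization in Theorem \ref{n-critical normal form}, specialized to two affine critical points. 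Two small remarks: the conjugation by $z \mapsto 1-z$ (which replaces $k$ by $d-1-k$, cf.\ Lemma \ref{conjugacy}) is not needed for this lemma, only for the sharper range $1 \leq k \leq \left\lceil \frac{d-2}{2} \right\rceil$ of Lemma \ref{bound on k}, since the statement here allows any $k$ with $1 \leq k \leq d-2$; and your phrase ``post-composing with the involution'' should read ``conjugating by the involution,'' as post-composition alone is not a conjugacy --- but you say as much parenthetically, so this is cosmetic.
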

Note that the proof of \cite[Proposition 4.0.2]{Bella} implies that the ramification index of $0$ with respect to $f_{a,c}(z) = a\mathcal{B}_{d,k} + c$ is $d-k$, while the ramification index of $1$ is $k+1$ and that 0 is a critical point implies a ramification index of at least 2, so that $k \leq d- 2$. Similarly, the ramification index of $1$ must be at least 2, so that $1\leq k \leq d-2$.
\begin{lem}[\textup{\cite[Proposition 4.0.6]{Bella}}] \label{conjugacy}
Let $f_0 \neq f_1 \in K[z]$ with $f_0 (z) = a_0 B_{d,k_0} + c_0$  and $f_1 (z) = a_1 B _{d,k_1} + c_1$ . The
polynomials $f_0$ and $f_1$ are conjugate if and only if $k_0 + k_1 = d - 1$, $a_0 = a_1$, and $c_1 = 1 - a_0 - c_0 $.
\end{lem}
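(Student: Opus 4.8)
The plan is to reduce the conjugacy question to a single explicit affine map and then exploit a symmetry of the Belyi polynomials. First I would observe that since $f_0$ and $f_1$ are both polynomials, $\infty$ is the unique totally ramified fixed point of each; any conjugating $\alpha \in \PGL_2$ with $f_1 = \alpha \circ f_0 \circ \alpha^{-1}$ must therefore fix $\infty$, so $\alpha(z) = \lambda z + \mu$ is affine. Conjugation carries critical points to critical points with the same ramification index, and by Lemma \ref{Bella} both maps have affine critical points exactly $\{0,1\}$. Hence $\alpha$ permutes $\{0,1\}$, forcing either $\alpha = \mathrm{id}$ or $\alpha(z) = 1 - z$. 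The case $\alpha = \mathrm{id}$ gives $f_0 = f_1$, excluded by hypothesis, so $\alpha(z) = 1-z$. Matching ramification indices across the swap $0 \leftrightarrow 1$ (using that $0$ has index $d - k_i$ and $1$ has index $k_i + 1$ for $f_i$) yields $d - k_0 = k_1 + 1$, that is, $k_0 + k_1 = d-1$.

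The key technical input is the symmetry
\[
\mathcal{B}_{d,k}(1-z) = 1 - \mathcal{B}_{d,\,d-1-k}(z).
\]
Rather than manipulate the explicit sum \eqref{eq:Bdk}, I would prove this by a uniqueness characterization: $\mathcal{B}_{d,k}$ is the unique degree-$d$ polynomial whose derivative has the form $C\,z^{d-k-1}(z-1)^{k}$ and which satisfies $\mathcal{B}_{d,k}(0)=0$ and $\mathcal{B}_{d,k}(1)=1$ (the derivative shape records the ramification data at $0$ and $1$, and the two normalizations pin down both the leading constant and the additive constant). Setting $g(z) = 1 - \mathcal{B}_{d,k}(1-z)$, a direct computation gives $g'(z) = C(-1)^{d-1} z^{k}(z-1)^{d-1-k}$, which is exactly the derivative shape for index $k' = d-1-k$, while $g(0)=1-\mathcal{B}_{d,k}(1)=0$ and $g(1)=1-\mathcal{B}_{d,k}(0)=1$; uniqueness then identifies $g$ with $\mathcal{B}_{d,d-1-k}$.

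With the symmetry in hand, the two coefficient relations fall out by substitution. Writing $\alpha(z) = 1-z = \alpha^{-1}(z)$ and $k_1 = d-1-k_0$,
\[
f_1(z) = (\alpha \circ f_0 \circ \alpha^{-1})(z) = 1 - f_0(1-z) = 1 - c_0 - a_0\,\mathcal{B}_{d,k_0}(1-z) = (1 - a_0 - c_0) + a_0\,\mathcal{B}_{d,k_1}(z),
\]
and comparing with $f_1(z) = a_1\mathcal{B}_{d,k_1}(z) + c_1$ gives $a_1 = a_0$ and $c_1 = 1 - a_0 - c_0$. For the converse, I would reverse this computation: assuming $k_0+k_1 = d-1$, $a_0 = a_1$, and $c_1 = 1-a_0-c_0$, the same substitution verifies $f_1(z) = 1 - f_0(1-z)$, exhibiting the explicit conjugacy by $z \mapsto 1-z$.

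I expect the main obstacle to be the symmetry relation itself. Establishing it cleanly requires the uniqueness characterization above rather than a brute-force comparison of the coefficients in \eqref{eq:Bdk}, which would entail a somewhat delicate binomial identity; packaging the ramification and normalization data into the derivative form is what keeps the argument short and also transparently explains why the index flips from $k$ to $d-1-k$.
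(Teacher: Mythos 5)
The paper never proves this lemma---it is imported verbatim from Tobin's thesis \cite[Proposition 4.0.6]{Bella} and explicitly ``used without proof''---so your argument can only be judged on its own terms, and on those terms it is correct and self-contained. The skeleton is the natural one: $\infty$ is the unique totally ramified point of each $f_i$ (you say ``totally ramified fixed point,'' but the cleaner invariant is simply ``totally ramified point''; uniqueness holds because the affine indices $d-k_i$ and $k_i+1$ are both strictly less than $d$ when $1\le k_i\le d-2$), so any conjugacy is affine; it must permute the marked critical points $\{0,1\}$; the identity is excluded by $f_0\neq f_1$; and matching ramification indices across $z\mapsto 1-z$ forces $k_0+k_1=d-1$. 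The coefficient relations then follow from the symmetry $\mathcal{B}_{d,k}(1-z)=1-\mathcal{B}_{d,d-1-k}(z)$, and characterizing $\mathcal{B}_{d,k}$ by its derivative shape $C\,z^{d-k-1}(z-1)^k$ together with $\mathcal{B}_{d,k}(0)=0$, $\mathcal{B}_{d,k}(1)=1$ is indeed the right way to get that symmetry without wrestling with binomial identities. Two inputs should be made explicit before the uniqueness characterization can be invoked for the polynomial actually defined in \eqref{eq:Bdk}: (i) that \eqref{eq:Bdk} has derivative $\frac{\prod_{j=0}^{k}(d-j)}{k!}\,z^{d-k-1}(z-1)^k$, which is a one-line computation via the binomial theorem since differentiating cancels the factors $d-i$; and (ii) that $\mathcal{B}_{d,k}(1)=1$, which is not visible from \eqref{eq:Bdk} without the partial-fraction identity $\sum_{i=0}^{k}\frac{(-1)^{k-i}}{(k-i)!\,i!\,(d-i)}=\prod_{j=0}^{k}(d-j)^{-1}$ (alternatively, cite \cite[Proposition 3.1]{Dynamical}, where fixing $0$, $1$, $\infty$ is part of the definition of the normal form). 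With those two verifications inserted, both directions of the equivalence go through exactly as you wrote them.
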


Combining Lemma \ref{conjugacy} with the inequality $1 \leq k \leq d-2$, we note that $f_{a,c}(z) = a\mathcal{B}_{d,k}(z) +c$ can be chosen such that $1 \leq k \leq \left\lceil \frac{d-2}{2} \right\rceil$.

\begin{lem} \label{bound on k}
Let $g \in K[z]$ be a bicritical polynomial of degree $d \geq 3$. Then $g$ is conjugate to a map $f_{a, c}(z) = a\mathcal{B}_{d,k}(z) + c$, where $1 \leq k \leq \left\lceil \frac{d-2}{2} \right\rceil$.
\end{lem}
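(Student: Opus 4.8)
The plan is to combine the two lemmas already stated with the elementary arithmetic inequality on $k$. Lemma \ref{Bella} gives that any bicritical $g \in K[z]$ of degree $d \geq 3$ is conjugate to some $f_{a,c}(z) = a\mathcal{B}_{d,k}(z) + c$ with affine critical points $\{0,1\}$; from the remark following that lemma we know $1 \leq k \leq d-2$. The only thing left to achieve is to replace the weaker bound $k \leq d-2$ with the sharper bound $k \leq \left\lceil \frac{d-2}{2} \right\rceil$, and the tool for this is Lemma \ref{conjugacy}.

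First I would observe that if $f_{a,c} = a\mathcal{B}_{d,k} + c$ is any representative produced by Lemma \ref{Bella}, then Lemma \ref{conjugacy} tells us exactly which Belyi-normal-form maps are conjugate to it: taking $k_0 = k$ in that lemma, the map $f_{a,c}$ is conjugate to $f_{a,c'} = a\mathcal{B}_{d,k'} + c'$ precisely when $k' = d-1-k$ (together with the prescribed relation $c' = 1 - a - c$ on the constant). Thus for each admissible $k$ we have a companion value $k' = d-1-k$ giving a conjugate map, and we are free to choose whichever of $k$ and $d-1-k$ is smaller.

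Next I would carry out the elementary check that at least one of $k$ and $d-1-k$ lies in the range $1 \leq \,\cdot\, \leq \left\lceil \frac{d-2}{2} \right\rceil$. Since $k$ and $d-1-k$ sum to $d-1$, their minimum is at most $\left\lfloor \frac{d-1}{2} \right\rfloor = \left\lceil \frac{d-2}{2} \right\rceil$, and because $1 \leq k \leq d-2$ both values lie in the interval $[1, d-2]$, so in particular the smaller one is at least $1$. Choosing $k$ (after possibly swapping to $d-1-k$ via the conjugacy of Lemma \ref{conjugacy}) to be this minimum yields a representative $f_{a,c}(z) = a\mathcal{B}_{d,k}(z) + c$ with $1 \leq k \leq \left\lceil \frac{d-2}{2} \right\rceil$, which is exactly the claim.

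I do not anticipate a serious obstacle here, since the statement is essentially bookkeeping once Lemmas \ref{Bella} and \ref{conjugacy} are in hand; indeed the paper itself flags this reduction in the sentence immediately preceding the lemma. The only point requiring a little care is matching the ceiling and floor: verifying $\left\lfloor \frac{d-1}{2} \right\rfloor = \left\lceil \frac{d-2}{2} \right\rceil$ for all integers $d$, which is a routine parity check (both equal $\frac{d-1}{2}$ when $d$ is odd and $\frac{d-2}{2}$ when $d$ is even). One should also confirm that the swap $k \mapsto d-1-k$ does not push the value below $1$, but this is immediate from $k \leq d-2$.
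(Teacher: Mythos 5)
Your proposal is correct and follows exactly the paper's own route: the paper proves this lemma simply by noting (in the sentence preceding it) that Lemma \ref{Bella} gives $1 \leq k \leq d-2$ and Lemma \ref{conjugacy} allows swapping $k$ for $d-1-k$, which is precisely your argument with the routine parity check $\left\lfloor \frac{d-1}{2} \right\rfloor = \left\lceil \frac{d-2}{2} \right\rceil$ written out.
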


Hence, by making an appropriate change of variables, we may assume that our bicritical polynomial has the form $f_{a,c}(z)$ with marked critical points 0 and 1.

Now we write the equations in terms of $a$ and $c$ defining when the two critical points are periodic, i.e., when $f$ is a PCF bicritical polynomial. Define the polynomials
\begin{align*}
    F_n(a,c) = f^n_{a,c}(0) \quad G_m(a,c) = f^m_{a,c}(1)-1.
\end{align*}
The solutions $(\alpha,\beta)$ to
\begin{align*}
F_n(a,c) = G_m(a,c) = 0
\end{align*}
and are exactly the pairs $(\alpha, \beta)$ such that $f_{\alpha, \beta}(z)$ is post-critically finite with 0 and 1 being periodic with periods $n$ and $m$ respectively. Thus, we want to prove that such solutions $\alpha$ and $\beta$ are $p$-adically integral for some prime $p$. For completeness, we restate the definition given in the introduction for the class of primes we consider.

\begin{defn}
A prime $p$ is a \emph{index divisor free} prime for $(d,k)$ when all the following conditions hold.
\begin{itemize}
    \item $p$ is greater than $k$.
    \item $p$ divides $d-r$ for some $r$ less than or equal to $k$. As $p > k$, this $r$ is unique.
    \item For the unique $r$ above, $r \nmid v_p(d-r)$, where  $v_p$ is the normalized $p$-adic valuation.
\end{itemize}
\end{defn}
\noindent Note that we do not require that $p$ is a prime of good reduction for $f_{a,c}(z) = a\mathcal{B}_{d,k}(z) + c$. Also, note that $r$ can not equal 1, as 1 divides every number, violating the third condition. The third condition reads as the index $r$ does not divide the power of $p$ dividing $d-r$, hence the name ``index divisor free.'' We abbreviate ``index divisor free'' as \emph{IDF}.
An IDF prime ideal is defined in a similar fashion. When working in extensions of $\mathbb{Q}$, we use IDF prime ideals instead of primes.

Let us examine how index divisor free primes for $(d,k)$ relate to $\mathcal{B}_{d,k}(z)$. We label the coefficients of $\mathcal{B}_{d,k}(z)$ as $b_0, b_1, \ldots , b_k$. Note that the denominator of $b_i$ is $(k-i)! i!$, which contains no powers of primes greater than $k$ as both $k-i$ and $i$ are at most $k$, so that all $b_i$ are $p$-adically integral. Also note that since $p$ divides $d-r$ for some $0 \leq r \leq k$, $p$ must divide every $b_i$ except for $b_r$, as $d-r$ will occur in the product $\prod_{j=0, j \neq i}^k d - j$ except when $r = i$. Moreover, we must have that $v_p(b_i) = v_p(d-r)$ except for $v_p(b_r)$, which is 0.

To further motivate the above definition, note that first two conditions together guarantee that the reduction of $f_{a,c}(z)$ modulo an IDF prime is a monomial. This allows control of the forward orbit of the critical points modulo $p$. The third condition arises as a technicality in our proof that the post-critically finite solutions $(\alpha, \beta)$ are $p$-adically integral.

We now investigate the valuation with respect to an IDF prime of the image of a point under $f_{a,c}(z)$, showing that the hypothesis of IDF primes give some control of the valuation of the image.
\begin{lem}\label{lem:valuation_of_image}
Let p be an IDF prime for $(d,k)$, and let $f_{a,c}(z) = a\mathcal{B}_{d,k}(z) + c$. Let $\alpha$ and $\beta$ be algebraic over $\mathbb{Q}$, and let $x \in \mathbb{Q}(\alpha, \beta)$. Let $\mathfrak{p}$ be a prime ideal of $\mathbb{Q}(\alpha, \beta)$ lying above $p$. Then, if $v_\mathfrak{p}(x)$ is zero, we have that
\begin{equation}\label{eq:mineq0}
    v_{\mathfrak{p}}(f_{\alpha,\beta}(x)) \geq \min \{ v_{\mathfrak{p}}(\alpha), v_{\mathfrak{p}}(\beta) \}
\end{equation}
while if $v_\mathfrak{p}(x)$ is negative, we have that
\begin{equation}\label{eq:minleq0}
    v_\mathfrak{p}(f_{\alpha, \beta}(x)) \geq \min \{ v_\mathfrak{p}(\alpha) + v_\mathfrak{p}(d-r) + d v_\mathfrak{p}(x), v_\mathfrak{p}(\alpha) + (d-r) v_\mathfrak{p}(x), v_\mathfrak{p}(\beta) \}.
\end{equation}
Both of the inequalities \eqref{eq:mineq0} and \eqref{eq:minleq0} are equalities if and only if the minimum is unique. Further, $v_\mathfrak{p}(\alpha) + v_\mathfrak{p}(d-r) + d v_\mathfrak{p}(x)$ does not equal $v_\mathfrak{p}(\alpha) + (d-r) v_\mathfrak{p}(x)$.
\end{lem}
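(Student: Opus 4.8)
The plan is to expand
$f_{\alpha,\beta}(x) = \alpha\,\mathcal{B}_{d,k}(x) + \beta = \sum_{i=0}^{k} \alpha b_i x^{d-i} + \beta$
and estimate each of the $k+2$ monomial terms separately, then invoke the non-archimedean triangle inequality. The starting point is the coefficient analysis already recorded above: since $p > k$ the denominators $(k-i)!\,i!$ are $\mathfrak{p}$-adic units, and since $d-r$ is the unique one of the $k+1 \le p$ consecutive integers $d-k,\dots,d$ divisible by $p$, we have $v_{\mathfrak p}(b_r) = 0$ while $v_{\mathfrak p}(b_i) = v_{\mathfrak p}(d-r) > 0$ for every $i \neq r$. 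Consequently the term valuations are $v_{\mathfrak p}(\alpha b_r x^{d-r}) = v_{\mathfrak p}(\alpha) + (d-r)v_{\mathfrak p}(x)$, then $v_{\mathfrak p}(\alpha b_i x^{d-i}) = v_{\mathfrak p}(\alpha) + v_{\mathfrak p}(d-r) + (d-i)v_{\mathfrak p}(x)$ for $i \neq r$, and $v_{\mathfrak p}(\beta)$ for the constant term.

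First I would treat the case $v_{\mathfrak p}(x) = 0$. Here every coefficient term has valuation $v_{\mathfrak p}(\alpha) + v_{\mathfrak p}(b_i) \ge v_{\mathfrak p}(\alpha)$, so the ultrametric inequality immediately gives \eqref{eq:mineq0}. For the case $v_{\mathfrak p}(x) < 0$, the key observation is that among the terms with $i \neq r$ the valuations $v_{\mathfrak p}(\alpha) + v_{\mathfrak p}(d-r) + (d-i)v_{\mathfrak p}(x)$ are strictly increasing in $i$, since the exponents $d-i$ strictly decrease while $v_{\mathfrak p}(x) < 0$. Hence this whole block is minimized, uniquely, at $i = 0$ (recall $r \ge 1$, so $0$ lies in the block), with value $v_{\mathfrak p}(\alpha) + v_{\mathfrak p}(d-r) + d\,v_{\mathfrak p}(x)$. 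Taking the minimum of this, the $i = r$ term, and the constant term yields the three-way bound \eqref{eq:minleq0}.

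For the equality statements I would use the sharp form of the ultrametric inequality: a finite sum has valuation exactly equal to the minimum of its terms' valuations precisely when that minimum is attained by a single term. In both cases the $i\neq r$ block contributes no tie at the global minimum — when $v_{\mathfrak p}(x)<0$ it is strictly ordered with unique smallest term at $i=0$, and when $v_{\mathfrak p}(x)=0$ every block term exceeds the $i=r$ term by $v_{\mathfrak p}(d-r)>0$. Therefore the global minimum over all $k+2$ terms is attained uniquely if and only if the displayed three-way (resp.\ two-way) minimum is attained uniquely, which is exactly the asserted equivalence. Finally, writing $A$ and $B$ for the first two quantities in \eqref{eq:minleq0}, I compute $A - B = v_{\mathfrak p}(d-r) + r\,v_{\mathfrak p}(x)$; normalizing $v_{\mathfrak p}$ to restrict to $v_p$ on $\mathbb{Q}$ so that $v_{\mathfrak p}(d-r) = v_p(d-r)$ and $v_{\mathfrak p}(x) \in \mathbb{Z}$, an equality $A = B$ would force $r \mid v_p(d-r)$, contradicting the third IDF condition $r \nmid v_p(d-r)$. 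Thus $A \neq B$.

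I expect the main obstacle to be the equality characterization: one must verify that collapsing the $i \neq r$ monomials into the single bound $v_{\mathfrak p}(\alpha)+v_{\mathfrak p}(d-r)+d\,v_{\mathfrak p}(x)$ does not corrupt the ``unique minimizer'' bookkeeping, which works precisely because that block introduces no internal tie at the minimum. The second delicate point is the normalization of $v_{\mathfrak p}$ in the final inequality $A \neq B$, where the IDF hypothesis $r \nmid v_p(d-r)$ is used in an essential way and is the only place the third defining condition of an IDF prime enters the argument.
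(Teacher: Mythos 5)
Your overall route is the same as the paper's: expand $f_{\alpha,\beta}(x)$ term by term, use that $v_{\mathfrak p}(b_r)=0$ while $v_{\mathfrak p}(b_i)=v_{\mathfrak p}(d-r)$ for $i\neq r$, apply the ultrametric inequality, collapse the $i\neq r$ terms to the single quantity $A=v_{\mathfrak p}(\alpha)+v_{\mathfrak p}(d-r)+d\,v_{\mathfrak p}(x)$ when $v_{\mathfrak p}(x)<0$, and derive $A\neq B$ from the condition $r\nmid v_p(d-r)$; you are in fact more explicit than the paper about why collapsing the block does not spoil the unique-minimum bookkeeping. However, there is one concrete error: the parenthetical ``recall $r\ge 1$'' is false. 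The paper's constraint is $r\neq 1$; the value $r=0$ is allowed and is in fact the most common case, since it is exactly the case $p\mid d$ (the first case invoked in the existence arguments of Lemma~\ref{lem:k<4}). When $r=0$ the block $\{i\neq r\}=\{1,\dots,k\}$ does not contain $i=0$, so it is \emph{not} minimized at $i=0$: its minimum is $v_{\mathfrak p}(\alpha)+v_{\mathfrak p}(d)+(d-1)v_{\mathfrak p}(x)$, which is strictly larger than $A$. The inequality \eqref{eq:minleq0} survives, because $A$ is still a strict lower bound for every block term (this is how the paper phrases the reduction, which is why its wording covers $r=0$), but your unique-minimizer argument needs a patch: when $r=0$ one has $A-B=v_{\mathfrak p}(d)>0$, so $A$ can never be the minimum of the displayed three-term set, and uniqueness of that minimum (necessarily at $B$ or at $v_{\mathfrak p}(\beta)$, both of which are actual terms) still forces a unique global minimizer among the $k+2$ terms. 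With that one-line repair your equality characterization goes through in all cases.

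A secondary point: your normalization in the last step, namely that $v_{\mathfrak p}$ restricts to $v_p$ on $\mathbb Q$ \emph{and} $v_{\mathfrak p}(x)\in\mathbb Z$, is self-contradictory when $\mathfrak p$ is ramified over $p$; you can arrange one or the other, not both. If instead $v_{\mathfrak p}$ is the valuation normalized to have value group $\mathbb Z$, then $v_{\mathfrak p}(d-r)=e\,v_p(d-r)$ with $e$ the ramification index, and $A=B$ only yields $r\mid e\,v_p(d-r)$, which the IDF condition does not immediately exclude. The paper sidesteps exactly this by assuming $\alpha,\beta\in\mathbb Q$ ``for simplicity,'' so your writeup matches the paper's actual level of rigor, but the ramified case is not covered by the argument as you stated it.
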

\begin{proof}
We assume for simplicity that $\alpha, \beta \in \mathbb{Q}$ and thus that $\mathfrak{p}$ is $p$, as the proof is not substantially different when $\alpha, \beta \not \in \mathbb{Q}$. Let $x \in \mathbb{Q}(\alpha, \beta)$. Consider $v_p(f_{\alpha,\beta}(x))$. Using the labels $b_0, \ldots , b_k$ for the coefficients of $\mathcal{B}_{d,k}(z)$ as above, we have
\begin{align*}
    v_p(f_{\alpha, \beta}(x)) &= v_p(\alpha(\mathcal{B}_{d,k}(x)) + \beta)
    = v_p(\alpha(b_0x^d + b_1 x^{d-1} + \ldots + b_k x^{d-k})+\beta)\\
    &\geq \min \{v_p(\alpha b_0 x^d), v_p(\alpha b_1 x^{d-1}), \ldots , v_p(\alpha b_k x^{d-k}), v_p(\beta) \}\\
    &= \min \{ v_p(\alpha) + v_p(b_0) + d v_p(x), \ldots, v_p(\alpha) + v_p(b_k) + (d-k) v_p(x), v_p(\beta) \},
\end{align*}
where equality occurs if there is a unique minimum. Since $p$ is an IDF prime, fix $r$ so that $p \mid (d-r)$. We know that $v_p(b_i) = v_p(d-r)$ for all $i$, with the exception that $v_p(b_r) = 0$. Substituting into the set we are minimizing, we find that
\begin{multline*}
   v_p(f_{\alpha, \beta}(x)) \geq \min \{ v_p(\alpha) + v_p(d-r) + d v_p(x), \ldots, v_p(\alpha) + (d-r) v_p(x), \ldots, \\
   v_p(\alpha) + v_p(d-r) + (d-k) v_p(x), v_p(\beta) \}.
\end{multline*}
If $v_p(x)$ is zero, then we have that
\begin{align*}
    v_p(f_{\alpha,\beta}(x)) \geq \min \{ v_p(\alpha) + v_p(d-r), \ldots, v_p(\alpha), \ldots, v_p(\alpha) + v_p(d-r), v_p(\beta) \}
\end{align*}
which gives Equation (\ref{eq:mineq0}) as $v_p(d-r) > 0$.

If $v_p(x)$ is negative, then $v_p(\alpha) + v_p(d-r) + dv_p(x)$ is less than $v_p(\alpha) + v_p(d-r) + (d-1) v_p(x)$, and less than $v_p(\alpha) + v_p(d-r) + (d-2) v_p(x)$, etc. So in this case we can ignore all but three terms when we minimize, giving Equation (\ref{eq:minleq0}).
As we are concerned with when the minimum is unique, we note here that $v_p(\alpha) + v_p(d-r) + d v_p(x)$ cannot equal $v_p(\alpha) + (d-r) v_p(x)$, as then we would have that
\begin{align*}
    v_p(d-r) = -rv_p(x)
\end{align*}
which is not possible since $p$ is an IDF prime so that $r$ does not divide $v_p(d-r)$.
\end{proof}

We will need the following technical lemma to handle Case 4.iii in the proof of the main result in this section (Proposition \ref{prop:pintegral}).
\begin{lem}\label{lem:fxy}
    Assume $v_p(\beta) < 0 < v_p(\alpha)$ and $\min\{v_p(\alpha) + v_p(d-r) + dv_p(\beta), v_p(\alpha) + (d-r)v_p(\beta)\} = v_p(\beta)$. For any integer $n\geq0$, we can express
    \begin{equation}\label{eq:fxy}
    f^n_{\alpha, \beta}(X+Y) = f^{n}_{\alpha, \beta}(X)+h_n(X,Y)
    \end{equation}
    for some polynomial $h_n(X,Y)$.
    Further, if $x, y \in \mathbb{Q}(\alpha, \beta)$, with $v_p(x) \geq v_p(\beta)$ and $v_p(y) \geq v_p(\alpha)$, then $v_p(h_n(x, y))$ is greater than or equal to $v_p(\alpha)$ and $v_p(f^{n}_{\alpha,\beta}(x))$ is greater than or equal to $v_p(\beta)$.
\end{lem}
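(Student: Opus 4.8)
The existence of the polynomial $h_n$ is immediate: set $h_n(X,Y) := f^n_{\alpha,\beta}(X+Y) - f^n_{\alpha,\beta}(X)$, a difference of polynomials, so the whole content lies in the two valuation bounds. I would write $A := v_p(\alpha) > 0$, $B := v_p(\beta) < 0$, and $D := v_p(d-r) > 0$, and record at the outset the only consequence of the hypothesis I will use: since the minimum of $A + D + dB$ and $A + (d-r)B$ equals $B$, both quantities satisfy
\begin{equation*}
A + (d-r)B \geq B \qquad\text{and}\qquad A + D + dB \geq B.
\end{equation*}
I would also recall from the coefficient analysis preceding Lemma \ref{lem:valuation_of_image} that $v_p(b_i) = D$ for $i \neq r$ while $v_p(b_r) = 0$. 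Both bounds are proved by induction on $n$, and I would establish the orbit bound first, since the bound on $h_n$ relies on it.

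For the orbit bound I claim $v_p(f^n_{\alpha,\beta}(x)) \geq B$ whenever $v_p(x) \geq B$, inducting on $n$ (the case $n=0$ being the hypothesis $v_p(x)\geq B$). Writing $u = f^n_{\alpha,\beta}(x)$, so $v_p(u)\geq B$ by induction, I would bound $v_p(f_{\alpha,\beta}(u))$ by cases on the sign of $v_p(u)$. If $v_p(u) > 0$, every monomial of $\alpha\mathcal{B}_{d,k}(u)$ has valuation at least $A > 0 > B$, so the constant $\beta$ dominates and $v_p(f_{\alpha,\beta}(u)) = B$. If $v_p(u) = 0$, inequality \eqref{eq:mineq0} gives $v_p(f_{\alpha,\beta}(u)) \geq \min\{A,B\} = B$. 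If $B \leq v_p(u) < 0$, inequality \eqref{eq:minleq0} applies, and since $v_p(u)\geq B$ with $d, d-r > 0$ its first two entries are bounded below by $A + D + dB$ and $A + (d-r)B$, both $\geq B$ by the recorded inequalities, while the third entry is $B$; hence $v_p(f_{\alpha,\beta}(u)) \geq B$. In every case the induction closes.

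For the bound on $h_n$ I would induct on $n$, with $h_0 = Y$ giving the base case. Applying $f_{\alpha,\beta}$ to $f^n_{\alpha,\beta}(X+Y) = f^n_{\alpha,\beta}(X) + h_n(X,Y)$, the constant term cancels and yields
\begin{equation*}
h_{n+1}(X,Y) = \alpha\bigl(\mathcal{B}_{d,k}(u + w) - \mathcal{B}_{d,k}(u)\bigr), \qquad u = f^n_{\alpha,\beta}(X),\quad w = h_n(X,Y).
\end{equation*}
Evaluating at $(x,y)$, the orbit bound gives $v_p(u)\geq B$ and the inductive hypothesis gives $v_p(w)\geq A > 0$. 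Expanding $\mathcal{B}_{d,k}(u+w) - \mathcal{B}_{d,k}(u) = \sum_{i=0}^{k} b_i \sum_{j=1}^{d-i} \binom{d-i}{j} u^{d-i-j} w^j$ by the binomial theorem, it suffices to show each monomial is $p$-integral, for then $v_p(h_{n+1}(x,y)) \geq A$. The $(i,j)$ monomial has valuation at least $\phi(i,j) := v_p(b_i) + (d-i-j)B + jA$ (discarding the integral binomial coefficient and using $d-i-j \geq 0$, $v_p(u)\geq B$, $v_p(w)\geq A$). Since $A - B > 0$, $\phi(i,j)$ is increasing in $j$, so it suffices to treat $j = 1$: for $i = r$ one gets $\phi(r,1) = A + (d-r)B - B \geq 0$, and for $i \neq r$ one gets $\phi(i,1) = A + D + (d-i-1)B \geq A + D + (d-1)B = (A + D + dB) - B \geq 0$ (the middle step because $B < 0$ and $i \geq 0$), each by one of the recorded hypothesis inequalities. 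This closes the induction.

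The main obstacle is that the intermediate orbit values $u = f^n_{\alpha,\beta}(x)$ can have valuation as negative as $B = v_p(\beta)$, so the monomials $u^{d-i-j}$ contribute large negative valuation and naive integrality fails. The role of the lemma's hypothesis is precisely to force the two extreme monomials to stay integral: the high-degree term at an index $i \neq r$ (governed by $A + D + dB$) and the special term at index $i = r$ (governed by $A + (d-r)B$). Recognizing that monotonicity in $j$ reduces every monomial to these two, rather than the arithmetic itself, is the crux.
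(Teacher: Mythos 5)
Your proof is correct, and its skeleton matches the paper's: the same two inductions, the same recursion $h_{n+1}(X,Y)=\alpha\left(\mathcal{B}_{d,k}(u+w)-\mathcal{B}_{d,k}(u)\right)$ with $u=f^n_{\alpha,\beta}(X)$ and $w=h_n(X,Y)$, and the hypothesis enters in exactly the same way, through the two inequalities $v_p(\alpha)+v_p(d-r)+dv_p(\beta)\geq v_p(\beta)$ and $v_p(\alpha)+(d-r)v_p(\beta)\geq v_p(\beta)$. (For the orbit bound your case split on the sign of $v_p(u)$, citing \eqref{eq:mineq0} and \eqref{eq:minleq0}, is a cosmetic variant of the paper's direct re-expansion.) Where you genuinely diverge is in bounding the monomials of the expansion of $h_{n+1}$. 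The paper retains the valuations of the binomial coefficients, asserts that $v_p\binom{j}{i}$ is always $v_p(d-r)$ or $0$, and concludes that the minimum occurs at $(j,i)=(d,1)$ or $(d-r,d-r)$, yielding the two-term bound \eqref{eq:h}; that assertion is stated loosely (e.g.\ $v_3\binom{9}{3}=1$, which is neither $0$ nor $v_3(9)=2$, a case that can arise with $d=11$, $k=r=2$, $p=3$), though the paper's final conclusion survives because only lower bounds are needed. You instead discard every binomial coefficient as $p$-integral, note that your bound $\phi(i,j)$ is increasing in the $w$-exponent $j$ since $v_p(\alpha)-v_p(\beta)>0$, and reduce to the $j=1$ terms, which split cleanly into the index-$r$ term (handled by $v_p(\alpha)+(d-r)v_p(\beta)\geq v_p(\beta)$) and the generic terms (handled by $v_p(\alpha)+v_p(d-r)+dv_p(\beta)\geq v_p(\beta)$). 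Your execution of this step is fully rigorous where the paper is terse, at no extra cost; the paper's variant buys nothing beyond the explicit two-candidate form \eqref{eq:h}.
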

\begin{proof}
    Begin with the case $n=0$. Then, $f^0_{\alpha,\beta}(X+Y) = X+Y = f_{\alpha,\beta}^0(X) + Y$, and we can verify all the claims immediately. Thus, we can assume that $n > 0$.
    Observe that
    \begin{align*}
    f_{\alpha, \beta}(X+Y)&= \alpha(b_{d}(X+Y)^d+\cdots+b_{d-k}(X+Y)^{d-k})+\beta\\
    &=\alpha(b_dX^d+\cdots+b_{d-k}X^{d-k})+\beta+\alpha \sum_{j=d-k}^d b_j\sum_{i=1}^j {j \choose i} X^{j-i}Y^i\\
    &=f_{\alpha,\beta}(X)+h_1(X,Y).
    \end{align*}
    Letting
    \[
    h_n(X,Y)=\alpha\sum_{j=k}^d b_j\sum_{i=1}^j{j\choose i} (f_{\alpha,\beta}^{n-1}(X))^{j-i} (h_{n-1}(X, Y)^i),
    \]
    it is easy to check Equation~\eqref{eq:fxy}.

    Let us now focus on the second part of this lemma. We begin by proving that $v_p(f^{n}_{\alpha, \beta}(x))$ is greater than or equal to $v_p(\beta)$ for all $n$ by induction. As $f^0_{\alpha, \beta}(x)$ equals $x$, the base case is true by assumption. Let us consider the general case. We can start by applying the assumptions, and then use the same logic that lead to Equation ~\eqref{eq:minleq0}:
    \begin{align*}
        v_p(f^n_{\alpha, \beta}(x)) &\geq \min \{v_p(\alpha) + v_p(b_d) + dv_p(f^{n-1}_{\alpha, \beta}(x)), \ldots,  v_p(\alpha) + v_p(b_{d-k}) + \\ & \hspace{7cm} (d-k)v_p(f^{n-1}_{\alpha,\beta}(x)), v_p(\beta)  \} \\
        &\geq \min \{v_p(\alpha) + v_p(b_d) + dv_p(\beta), \ldots, v_p(\alpha) + v_p(b_{d-k}) + (d-k)v_p(\beta), v_p(\beta)  \}\\
        &\geq \min \{v_p(\alpha) + v_p(d-r) + dv_p(\beta), v_p(\alpha) + (d-r)v_p(\beta), v_p(\beta)  \} = v_p(\beta),
    \end{align*}
    and, hence, we have shown the desired result.

    We proceed in a similar manner to prove that $v_p(h_n(x,y))$ is greater than or equal to $v_p(\alpha)$ for all $n$. The base case $n=1$ is almost identical to the general case, so we only consider the general case. Using the definition of $h_n(x,y)$, we find that
    \begin{align*}
    v_p(h_n(x, y)) &\geq \min \{v_p(\alpha) + v_p(b_j) + v_p\left( {j \choose i} \right) + (j-i) v_p(f^{n-1}_{\alpha, \beta}(x)) + iv(h_{n-1}(x, y)) \\
    &\hspace{6cm} \mid j = d-k,\ldots,d,\ \text{and}\ i=1,\ldots,j\}\\
    &\geq\min\{v_p(b_j)+v_p\left({j \choose i}\right) + (j-i)v_p(\beta) + (i+1)v_p(\alpha)\\
    &\hspace{6cm} \mid j=d-k,\ldots,d,\ \text{and}\ i=1,\ldots,j\}.
    \end{align*}
    We know that $v_p(b_j)$ equals $v_p(d-r)$ except when $j = d-r$, in which case $v_p(b_{d-r})$ equals zero. Similarly, $v_p\left({j \choose i}\right)$ equals either $v_p(d-r)$ or 0. Using these facts and the assumptions that $v_p(\beta)$ is negative and $v_p(\alpha)$ is positive, we find that minimum must occur when $(j,i)$ equals either $(d,1)$ or $(d-r,d-r)$ so we have
    \begin{equation}\label{eq:h}
    v_p(h_{n}(x, y)) \geq\min\{ v_p(d-r) + (d-1)v_p(\beta) + 2v_p(\alpha), (d-r+1)v_p(\alpha)\},
    \end{equation}
    where we have dropped $v_p\left( {j \choose i} \right)$ as it is positive.
    The second term of the above minimum is clearly greater than $v_p(\alpha)$. The assumption
    \[
    \min\{v_p(\alpha)+v_p(d-r)+dv_p(\beta),v_p(\alpha)+(d-r)v_p(\beta)\}=v_p(\beta)
    \]
    implies
    \[
    \min\{v_p(\alpha)+v_p(d-r)+(d-1)v_p(\beta),v_p(\alpha)+(d-r-1)v_p(\beta)\}=0.
    \]
    It follows that the first term of the minimum in Equation~\eqref{eq:h} is also greater than $v_p(\alpha)$, which is our desired result.
\end{proof}

We are now ready to state the main result of this section (Proposition \ref{prop:pintegral}), which is a restatement of part of Proposition \ref{prop:main}. The remaining part of Proposition \ref{prop:main}, the existence of IDF primes, is treated in Section \ref{idfprimes}.

\begin{prop} \label{prop:pintegral}
    Let $p$ be an IDF prime for $(d,k)$ and let $f_{a,c}(z) = a \mathcal{B}_{d,k}(z) + c$. Then the solutions $(\alpha, \beta)$ to $F_n(a,c) = G_m(a,c) = 0$ are $p$-adically integral.
\end{prop}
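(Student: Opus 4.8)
The plan is to argue by contradiction. Fix a prime ideal $\mathfrak{p} \mid p$ of $\mathbb{Q}(\alpha,\beta)$, write $v = v_\mathfrak{p}$, and suppose the solution $(\alpha,\beta)$ fails to be $\mathfrak{p}$-integral, so that $\min\{v(\alpha),v(\beta)\} < 0$. Since the lowest-degree term of $\mathcal{B}_{d,k}$ is $z^{d-k}$ with $d-k \geq 2$ we have $\mathcal{B}_{d,k}(0)=0$, and the normalization of the Belyi polynomial gives $\mathcal{B}_{d,k}(1)=1$; hence $f_{\alpha,\beta}(0)=\beta$ and $f_{\alpha,\beta}(1)=\alpha+\beta$. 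Thus $\beta$ lies in the forward orbit of $0$ and $\alpha+\beta$ lies in that of $1$, and both orbits are finite cycles returning to $0$ (valuation $+\infty$) and to $1$ (valuation $0$). The entire argument consists of showing that a negative valuation somewhere along these orbits is incompatible with their being such finite cycles.

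The engine is an \emph{escape} estimate read off from Lemma \ref{lem:valuation_of_image}: when $v(x)<0$ the image valuation is controlled by the two leading quantities $v(\alpha)+v(d-r)+d\,v(x)$ and $v(\alpha)+(d-r)v(x)$, which the lemma guarantees are never equal. Once $v(x)$ is sufficiently negative one of these is the strict minimum in \eqref{eq:minleq0} and is strictly less than $v(x)$, so $v(f^{\,j}_{\alpha,\beta}(x)) \to -\infty$. A finite orbit therefore cannot contain a point of very negative valuation, and this a priori bounds all the valuations occurring along the orbits of $0$ and $1$.

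I would then run a case analysis on the signs of $v(\alpha)$ and $v(\beta)$. If $v(\alpha)<0$, then $v(f_{\alpha,\beta}(1)) = v(\alpha+\beta)$ is negative and the orbit of $1$ escapes by the estimate above, contradicting periodicity. If $v(\beta)<0$ and some iterate of $\beta$ realizes its image valuation via a leading term, iterating again drives the valuation down and escapes; if instead $v(\beta)$ is the unique minimum in \eqref{eq:minleq0}, then $v(f^{\,j}_{\alpha,\beta}(\beta))$ stays constant and equal to $v(\beta)<0$ for all $j$, so the orbit of $0$ never returns to the value $+\infty$ attained at $0$ — again a contradiction. These cases dispose of everything except one boundary configuration.

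The genuinely delicate case — the one for which Lemma \ref{lem:fxy} was built, and which I expect to be the main obstacle — is the \emph{marginal} situation $v(\beta)<0<v(\alpha)$ with $\min\{v(\alpha)+v(d-r)+d v(\beta),\, v(\alpha)+(d-r)v(\beta)\} = v(\beta)$. Here the estimate gives only $v(f^{\,j}_{\alpha,\beta}(\beta)) \geq v(\beta)$, with possible upward drift, so neither escape nor stabilization applies and the orbit of $0$ alone yields nothing. The idea is to compare the two critical orbits. Writing $\alpha+\beta = \beta+\alpha$ and applying the decomposition $f^{m-1}_{\alpha,\beta}(\beta+\alpha) = f^{m-1}_{\alpha,\beta}(\beta) + h_{m-1}(\beta,\alpha)$ of Lemma \ref{lem:fxy}, one has $v(h_{m-1}(\beta,\alpha)) \geq v(\alpha) > 0$ and $v(f^{m-1}_{\alpha,\beta}(\beta)) \geq v(\beta)$, so the periodicity relation $f^m_{\alpha,\beta}(1)=1$ reads $1 = f^{m-1}_{\alpha,\beta}(\beta) + h_{m-1}(\beta,\alpha)$. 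The ultrametric inequality then pins down the valuation of a specific point in the orbit of $0$, and playing this value off against $v(\beta)<0$ — using that $0$ and $1$ sit in periodic cycles — forces the contradiction that rules out this last case and completes the proof.
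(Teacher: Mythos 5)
Your overall architecture --- case analysis on the signs of $v(\alpha)$ and $v(\beta)$, escape estimates driven by Lemma \ref{lem:valuation_of_image}, stabilization when $v(\beta)$ is the unique minimum, and Lemma \ref{lem:fxy} for the marginal case --- is exactly the paper's proof. But there is one genuine gap: your first case asserts that $v(\alpha)<0$ forces $v(f_{\alpha,\beta}(1))=v(\alpha+\beta)<0$. The ultrametric inequality only gives $v(\alpha+\beta)\geq\min\{v(\alpha),v(\beta)\}$, with equality when the minimum is unique, so the claim fails precisely when $v(\alpha)=v(\beta)<0$: cancellation can make $v(\alpha+\beta)$ zero, positive, or even $+\infty$. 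Concretely, nothing in your argument rules out a putative solution with $\beta=1-\alpha$ and $v(\alpha)=v(\beta)<0$; then $f_{\alpha,\beta}(1)=1$, the orbit of $1$ is a fixed point, and your escape engine never starts, so the case analysis is not exhaustive as written. The paper's Case 1 (both valuations negative) avoids this by running the escape along the orbit of $0$ instead: $f_{\alpha,\beta}(0)=\beta$ has negative valuation unconditionally, and since $v(\alpha)<0$ one has $v(\alpha)+(d-r)v(\beta)<v(\beta)$, so by Lemma \ref{lem:valuation_of_image} the minimum in \eqref{eq:minleq0} is uniquely attained by a leading term and the valuations along the orbit of $0$ strictly decrease forever. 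You need to re-route the subcase $v(\alpha)=v(\beta)<0$ (or simply all of $v(\alpha)<0$, $v(\beta)<0$) through the orbit of $0$ in this way; your orbit-of-$1$ argument is sound only when $v(\beta)\geq 0$ or $v(\beta)\neq v(\alpha)$.

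Your marginal case is the paper's Case 4.iii (the paper takes $Y=\alpha\mathcal{B}_{d,k}(1)$ where you take $Y=\alpha$ via the normalization $\mathcal{B}_{d,k}(1)=1$; these agree), but the closing step should be made precise, since deducing $v(f^{m}_{\alpha,\beta}(0))=0$ from $1=f^{m-1}_{\alpha,\beta}(\beta)+h_{m-1}(\beta,\alpha)$ does not by itself contradict $v(\beta)<0$: a cycle can perfectly well contain points of valuation $0$, $v(\beta)$, and $\infty$ simultaneously. The clean contradiction uses a \emph{common} period: take $N$ with $f^N_{\alpha,\beta}(0)=0$ and $f^N_{\alpha,\beta}(1)=1$ (e.g.\ $N=\mathrm{lcm}(m,n)$), so that $f^{N-1}_{\alpha,\beta}(\beta)=f^N_{\alpha,\beta}(0)=0$ has valuation $\infty$, and then $0=v(f^N_{\alpha,\beta}(1))\geq\min\{v(f^{N-1}_{\alpha,\beta}(\beta)),\,v(h_{N-1}(\beta,\alpha))\}\geq v(\alpha)>0$, which is absurd. (Equivalently, iterate your identity over multiples $jm$ of the period of $1$ until $jm$ is also a multiple of the period of $0$.) With the Case 1 re-routing and this common-period finish, your proposal coincides with the paper's argument.
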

\begin{proof}
Our general approach will be to show that if $v_p(\alpha)$ or $v_p(\beta)$ is negative, the $p$-adic valuation of the forward orbit of either 0 or 1 is negative and bounded from above.

Since the proof for an algebraic extension is not substantially different from the rational case, we will assume $\alpha$ and $\beta$ are rational for simplicity.

In order to prove the proposition, we divide into the following cases.
\begin{align*}
        \text{\textbf{Case 1.} } &v_p(\alpha) < 0,\;  v_p(\beta) < 0 {;}\\
        \text{\textbf{Case 2. }} &v_p(\alpha) < 0 \leq v_p(\beta){;} \\
        \text{\textbf{Case 3. }} &v_p(\beta) < 0,\; v_p(\alpha) = 0 {;}\\
        \text{\textbf{Case 4. }} &v_p(\beta) < 0 < v_p(\alpha){.}
\end{align*}

\textbf{Case 1.} We show by induction that $v_p(f^n_{\alpha, \beta}(0))$ tends towards $-\infty$ by showing that $v_p(f^{n+1}_{\alpha, \beta} (0))$ is strictly less than $v_p(f^n_{\alpha, \beta}(0))$ so that $0$ cannot be periodic. For the base case, $f_{\alpha, \beta}(0) = \beta$ and hence $v_p(f_{\alpha, \beta}(0)) = v_p(\beta) < v_p(0)$. Now consider $v_p(f^2_{\alpha,\beta}(0))$. In this case we have assumed $v_p(\beta) < 0$, so we can use Equation ~\eqref{eq:minleq0} to find that
\begin{align*}
    v_p(f^2_{\alpha,\beta}(0)) = v_p(f_{\alpha, \beta}(\beta)) \geq \min \{ v_p(\alpha) + v_p(d-r) + d v_p(\beta), v_p(\alpha) + (d-r) v_p(\beta), v_p(\beta) \}.
\end{align*}
The above inequality is an equality if there is a unique minimum. We have shown in Lemma \ref{lem:valuation_of_image} that $v_p(\alpha) + v_p(d-r) + d v_p(\beta)$ is not equal to $v_p(\alpha) + (d-r) v_p(\beta)$.

We show that $v_p(\alpha) + (d-r) v_p(\beta)$ is less than $v_p(\beta)$.
If $v_p(\alpha) + (d-r) v_p(\beta)$ were greater than or equal to $v_p(\beta)$, then we would have $v_p(\alpha)$ is greater than or equal to $(1-d+r)v_p(\beta)$. As $r \leq k < \left\lceil \frac{d-2}{2} \right\rceil$, $(1-d+r)v_p(\beta)$ is positive and hence $v_p(\alpha)$ would also be positive, a contradiction. Hence, the minimum is unique, and
\begin{align*}
    v_p(f_{\alpha, \beta}(\beta)) =  \min \{ v_p(\alpha) + v_p(d-r) + d v_p(\beta), v_p(\alpha) + (d-r) v_p(\beta)\}.
\end{align*}

Thus, $v_p(f^2_{\alpha,\beta}(0))$ equals $v_p(\alpha) + v_p(d-r) + d v_p(\beta)$ or $v_p(\alpha) + (d-r) v_p(\beta)$ and it must be less than $v_p(\beta)$. As $v_p(\beta)$ equals $v_p(f_{\alpha,\beta}(0))$, we thus have that $v_p(f^2_{\alpha,\beta}(0))$ is less than $v_p(f_{\alpha,\beta}(0))$. Application of Equation ~\eqref{eq:minleq0} in the inductive step combined with the fact that $v_p(\alpha) + v_p(d-r) + d v_p(\beta)$ and $v_p(\alpha) + (d-r)v_p(\beta)$ are less than $v_p(\beta)$ show that in this case $v_p(f^{n+1}_{\alpha, \beta} (0))$ is less than $v_p(f^n_{\alpha, \beta}(0))$.

As $v_p(f^n_{\alpha, \beta}(0))$ is always negative, $0$ is not periodic and $f_{\alpha, \beta}$ is not post-critically finite, so that $(\alpha, \beta)$ is not a solution to $F_n(a,c) = G_m(a,c) = 0$.

\textbf{Case 2.} We show by induction that $v_p(f^n_{\alpha, \beta}(1))$ tends to $-\infty$ by showing that $v_p(f^{n+1}_{\alpha, \beta}(1))$ is strictly less than $v_p(f^{n}_{\alpha, \beta}(1))$. In particular, $1$ is not periodic. To start the base case, we can use Equation ~\eqref{eq:mineq0} and the assumptions that $v_p(\alpha)$ is negative and $v_p(\beta)$ is non-negative
\begin{align*}
    v_p(f_{\alpha, \beta}(1)) = \min \{v_p(\alpha), v_p(\beta) \} = v_p(\alpha)
\end{align*}
with equality since $v_p(\alpha)$ is less than $v_p(\beta)$. Now consider $v_p(f^2_{\alpha, \beta}(1))$. As $v_p(f_{\alpha, \beta}(1))$ is negative, we can use Equation ~\eqref{eq:minleq0}
\begin{align*}
    v_p(f^2_{\alpha, \beta}(1)) &\geq \min \{ v_p(\alpha) + v_p(d-r) + d v_p(\alpha), v_p(\alpha) + (d-r) v_p(\alpha), v_p(\beta) \} \\
    &= \min \{ v_p(d-r) + (d+1) v_p(\alpha), (d-r+1) v_p(\alpha)\}.
\end{align*}
Lemma \ref{lem:valuation_of_image} shows that the minimum is unique, and, hence, the inequality becomes an equality. Thus, we have that $v_p(f^2_{\alpha, \beta}(1))$ is strictly less than $v_p(f_{\alpha, \beta}(1))$. By repeated use of Equation ~\eqref{eq:minleq0}, induction on $n$ shows that $v_p(f^{n+1}_{\alpha, \beta}(1))$ is strictly less than $v_p(f^{n}_{\alpha, \beta}(1))$ for all $n$.

\textbf{Case 3.} We show by induction that $v_p(f^n_{\alpha, \beta}(1))$ tends to $-\infty$, so that $1$ is not periodic. For the base case, we have
\begin{align*}
    v_p(f_{\alpha, \beta}(1)) = \min \{v_p(\alpha), v_p(\beta) \} = v_p(\beta)
\end{align*}
with equality as $v_p(\beta)$ is less than $v_p(\alpha)$. Using Equation ~\eqref{eq:minleq0}, we find that
\begin{align*}
    v_p(f^2_{\alpha, \beta}(1)) &\geq \min \{ v_p(\alpha) + v_p(d-r) + d v_p(\beta), v_p(\alpha) + (d-r) v_p(\beta), v_p(\beta) \} \\
    &= \min \{ v_p(d-r) + dv_p(\beta), (d-r)v_p(\beta)\}
\end{align*}
and Lemma ~\ref{lem:valuation_of_image} shows that the minimum is unique, and, hence, the inequalities become equality. Thus, we have that $v_p(f^2_{\alpha, \beta}(1)$ is strictly less than $v_p(f_{\alpha, \beta}(1))$. By repeated use of Equation ~\eqref{eq:minleq0}, induction on $n$ shows that $v_p(f^{n+1}_{\alpha, \beta}(1))$ is strictly less than $v_p(f^{n}_{\alpha, \beta}(1))$ for all $n$.

\textbf{Case 4.} To deal with this case, which is by far the most difficult, we further divide into 3 subcases.
\begin{align*}
    \text{\textbf{Case 4.i. }} &\min\{v_p(\alpha) + v_p(d-r) + dv_p(\beta), v_p(\alpha) + (d-r)v_p(\beta) \} < v_p(\beta){;} \\
    \text{\textbf{Case 4.ii. }} &\min\{v_p(\alpha) + v_p(d-r) + dv_p(\beta), v_p(\alpha) + (d-r)v_p(\beta) \} > v_p(\beta){;} \\
    \text{\textbf{Case 4.iii. }} &\min\{v_p(\alpha) + v_p(d-r) + dv_p(\beta), v_p(\alpha) + (d-r)v_p(\beta)\} = v_p(\beta){.}
\end{align*}
Cases 4.i. and 4.ii. are easy to deal with. We apply similar arguments to those used for Case 1 to show that in Case 4.i., $v_p(f^n_{\alpha, \beta}(0))$ tends to $-\infty$. As $f_{\alpha, \beta}(0)$ equals $\beta$, we have that $v_p(f_{\alpha, \beta}(0))$ is strictly less than $v_p(f^0_{\alpha, \beta}(0))$, and thus the base case is satisfied. Using Equation (\ref{eq:minleq0}) and the Case 4.i. assumption, we have that
\begin{align*}
    v_p(f_{\alpha, \beta}(\beta)) =  \min \{ v_p(\alpha) + v_p(d-r) + d v_p(\beta), v_p(\alpha) + (d-r) v_p(\beta)\}
\end{align*}
with equality by Lemma (\ref{lem:valuation_of_image}). Thus, $v_p(f^2_{\alpha, \beta}(0))$ is less than $v_p(\beta)$, and induction combined with Equation (\ref{eq:minleq0}) shows that $v_p(f^{n+1}_{\alpha, \beta}(0))$ is strictly less than $v_p(f^n_{\alpha, \beta}(0))$, giving the desired result.

In Case 4.ii., we can show that $v_p(f^n_{\alpha, \beta}(0))$ equals $v_p(\beta)$ for all $n \geq 1$. For the base case $n=1$, we have that $f_{\alpha, \beta}(0)$ equals $\beta$, and thus the base case holds. Then, the inductive hypothesis shows that the assumption for Equation (\ref{eq:minleq0}) is satisfied, and hence we have that
\begin{align*}
    v_p(f^{n+1}_{\alpha, \beta}(0)) &\geq \min \{ v_p(\alpha) + v_p(d-r) + d v_p(f^n_{\alpha,\beta}(0)), v_p(\alpha) + (d-r) v_p(f^n_{\alpha,\beta}(0)), v_p(\beta) \} \\
    & \geq \min \{ v_p(\alpha) + v_p(d-r) + d v_p(\beta), v_p(\alpha) + (d-r) v_p(\beta), v_p(\beta) \}
\end{align*}
By the assumption for Case 4.ii, this minimum is $v_p(\beta)$ and is unique, and thus we have that $v_p(f^{n+1}_{\alpha, \beta}(0))$ equals $v_p(\beta)$ as desired.

We now proceed with the proof of Case 4.iii. The first two assumptions of Lemma \ref{lem:fxy} are satisfied by our assumptions. Now as $f_{\alpha, \beta}$ is post-critically finite with 0 and 1 being periodic, we have that
\begin{align*}
    f^n_{\alpha, \beta}(0) = 0 \quad \text{and} \quad f^n_{\alpha, \beta}(1) = 1
\end{align*}
for some integer $n$. We then have that
\begin{align*}
    v_p(f^{n-1}_{\alpha, \beta}(f_{\alpha, \beta}(0))) = \infty, \quad & v_p(f^{n-1}_{\alpha, \beta}(f_{\alpha, \beta}(1))) = 0, \\
    v_p(f^{n-1}_{\alpha, \beta}(\beta)) = \infty, \quad\text{and } & v_p(f^{n-1}_{\alpha, \beta}(\alpha B_{d,k}(1) + \beta)) = 0.
\end{align*}
We proceed by setting $X$ to $\beta$ and $Y$ to $\alpha B_{d,k}(1)$ in Lemma ~\ref{lem:fxy}. In order to apply the full power of Lemma ~\ref{lem:fxy}  we must check the remaining two assumptions of Lemma ~\ref{lem:fxy} are satisfied. As $X$ equals $\beta$, the third assumption is satisfied. Finally, the last assumption is that $v_p(Y)$ is greater than $v_p(\alpha)$. We bound $v_p(Y)$ as follows
\begin{align*}
    v_p(Y) &= v_p( \alpha B_{d,k}(1) ) = v_p \left(\alpha \sum_i b_i \right) \\
    &\geq \min \{ v_p(\alpha) + v_p(b_d), \ldots, v_p(\alpha) + v_p(b_{d-k}) \} \geq v_p(\alpha),
\end{align*}
since $v_p(b_i)$ is always non-negative. Therefore, the last assumption is satisfied, and by Lemma ~\ref{lem:fxy} we have that
\begin{align*}
    0 = v_p(f^{n-1}_{\alpha, \beta}(\beta + \alpha B_{d,k}(1))) &= v_p(f^{n-1}_{\alpha, \beta}(\beta) + h_{n-1}(\beta, \alpha B_{d,k}(1))) \\
    & \geq \min \{ v_p(f^{n-1}_{\alpha, \beta}(\beta)), v_p(h_{n-1} (\beta, \alpha B_{d,k}(1))) \} \\
    & \geq \{ \infty, v_p(\alpha) \} \\
    &= v_p(\alpha)
\end{align*}
which is a contradiction as $v_p(\alpha)$ is strictly greater than 0.

\end{proof}

Careful analysis of where the assumptions are used in the proof of Proposition \ref{prop:pintegral} shows that even with weaker assumptions proofs of certain cases still hold. Cases 1 and 4 only rely on 0 being periodic, while Cases 2 and 3 rely only on 1 being periodic. Thus, we get that

\begin{prop}
Let $p$ be an IDF prime for (d,k) and let $f_{a,c}(z) = a\mathcal{B}_{d,k}(z) + c$. If $(\alpha, \beta)$ is a solution to $F_n(a,c) = 0$, then one of $\alpha$ and $\beta$ are $p$-integral. If $(\alpha, \beta)$ is a solution to $G_m(a,c) = 0$ and $\beta$ is $p$-integral, then so is $\alpha$.
\end{prop}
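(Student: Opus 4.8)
The plan is to obtain both assertions by revisiting the case analysis in the proof of Proposition~\ref{prop:pintegral} and recording exactly which periodicity hypothesis each case consumes. There the solution $(\alpha,\beta)$ of $F_n=G_m=0$ is excluded from every region of the $(v_p(\alpha),v_p(\beta))$--plane in which a coordinate is negative, by splitting into Cases 1--4. The key observation driving the plan is that Cases 1 and 4 derive their contradiction purely from the forward orbit of $0$, while Cases 2 and 3 derive theirs purely from the forward orbit of $1$. Since $F_n(a,c)=0$ is exactly the statement that $0$ is periodic and $G_m(a,c)=0$ is exactly the statement that $1$ is periodic, weakening the hypothesis to a single periodic critical point leaves intact precisely those cases that already used only that point.

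For the first assertion, I would suppose $(\alpha,\beta)$ solves $F_n(a,c)=0$, so $0$ is periodic, and suppose toward a contradiction that neither $\alpha$ nor $\beta$ is $p$-integral, i.e. $v_p(\alpha)<0$ and $v_p(\beta)<0$. This is precisely the hypothesis of Case 1. I would then reproduce the Case 1 computation verbatim: using Equation~\eqref{eq:minleq0} and the bound $r\le k\le\lceil\frac{d-2}{2}\rceil$, one shows $v_p(f^{j+1}_{\alpha,\beta}(0))<v_p(f^{j}_{\alpha,\beta}(0))$ for all $j$, so $v_p(f^{j}_{\alpha,\beta}(0))\to-\infty$ and $0$ cannot be periodic. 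Since this argument never refers to the orbit of $1$, the hypothesis $F_n=0$ alone yields the contradiction, and hence at least one of $\alpha,\beta$ is $p$-integral.

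For the second assertion, I would suppose $(\alpha,\beta)$ solves $G_m(a,c)=0$, so $1$ is periodic, assume $\beta$ is $p$-integral ($v_p(\beta)\ge0$), and suppose toward a contradiction that $v_p(\alpha)<0$. Then $v_p(\alpha)<0\le v_p(\beta)$, which is exactly the hypothesis of Case 2. I would again transcribe that argument: Equation~\eqref{eq:mineq0} gives $v_p(f_{\alpha,\beta}(1))=v_p(\alpha)<0$, and repeated use of Equation~\eqref{eq:minleq0} with the uniqueness-of-minimum clause of Lemma~\ref{lem:valuation_of_image} forces $v_p(f^{j}_{\alpha,\beta}(1))$ to strictly decrease, so $1$ is not periodic. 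This contradicts $G_m=0$, and therefore $v_p(\alpha)\ge0$.

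The work here is almost entirely bookkeeping, since the analytic content already resides in Proposition~\ref{prop:pintegral}; the one point demanding care, and the only place the argument could break, is confirming that Cases 1 and 2 genuinely use a single critical orbit. Here Case 4.iii is the cautionary example: its contradiction is extracted from $h_{n-1}(\beta,\alpha\mathcal{B}_{d,k}(1))$ via Lemma~\ref{lem:fxy} and simultaneously invokes $f^{n}_{\alpha,\beta}(0)=0$ and $f^{n}_{\alpha,\beta}(1)=1$, so it cannot be salvaged under a one--sided hypothesis. This dependency is exactly why the first assertion can only promise that \emph{one} of $\alpha,\beta$ is integral, ruling out the Case 1 region rather than all bad regions, and why the second assertion must assume $\beta$ is integral outright, so that the sole surviving bad region is the Case 2 wedge. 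Once these one--sided dependencies are verified, both statements follow immediately.
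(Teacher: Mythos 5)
Your proof is correct and follows essentially the same route as the paper, which likewise derives both assertions by recycling the case analysis of Proposition~\ref{prop:pintegral} and observing that Case 1 (resp.\ Case 2) uses only the periodicity of $0$ (resp.\ of $1$). If anything, your accounting is slightly sharper than the paper's: the paper asserts that Cases 1 \emph{and} 4 rely only on $0$ being periodic, which is inaccurate for Case 4.iii (as you note, it invokes both orbits through Lemma~\ref{lem:fxy}), whereas your argument correctly sidesteps this by observing that Case 4 is never needed for the first assertion, since $v_p(\alpha)>0$ in that region already makes one coordinate integral.
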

\begin{proof}
If $(\alpha, \beta)$ is a solution to $F_n(a,c) = 0$, then 0 is periodic under $f_{\alpha, \beta}$. Hence, the proofs of Cases 1 and 4 apply, which shows that at least one of $\alpha$ and $\beta$ are $p$-integral. Similarly, if $(\alpha, \beta)$ is a solution to $G_m(a,c) = 0$, then 1 is periodic under $f_{\alpha, \beta}$ and, hence, the proofs of Cases 2 and 3 apply. Thus, if $v_p(\beta)$ is non-negative, we can not be in Cases 1 or 4, and, hence, $\alpha$ must also be $p$-integral.
\end{proof}

Cases 1 and 2 both show the valuation of the orbit of a critical point is unbounded, so that the same proof works even if the critical point was only assumed to be preperiodic. Formally, let
\begin{align*}
    H^{(n_0,m_0)}(a,c) = f_{a,c}^{n_0}(0) - f_{a,c}^{m_0}(0), \quad K^{(n_1, m_1)}(a,c) = f_{a,c}^{n_1}(1) - f_{a,c}^{m_1}(1)
\end{align*}
so that the solutions $(\eta, \gamma)$ to $H^{(n_0,m_0)}(a,c) = K^{(n_1, m_1)}(a,c) = 0$ are such that $f_{\eta, \gamma}$ is PCF with $0$ having preperiod $(n_0, m_0)$ and 1 having preperiod $(n_1, m_1)$.

Then, we have the following proposition.

\begin{prop}\label{prop:preperiodic}
Let $p$ be an IDF prime for (d,k) and let $f_{a,c}(z) = a\mathcal{B}_{d,k}(z) + c$. If $(\eta, \gamma)$ is a solution to $H^{(n_0,m_0)}(a,c) = K^{(n_1, m_1)}(a,c) = 0$, then either both $\eta$ and $\gamma$ are $p$-integral or $v_p(\gamma) < 0$ and $0 < v_p(\eta)$.
\end{prop}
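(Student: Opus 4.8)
The plan is to reuse the inductive estimates from the proof of Proposition~\ref{prop:pintegral} for Cases 1, 2, and 3, after observing that those inductions never actually invoke periodicity of the critical points: each inductive step rests solely on the valuation recursion furnished by Equation~\eqref{eq:minleq0}, together with the base-case computations $f_{\eta,\gamma}(0)=\gamma$ (for Case 1) and the evaluation of $v_p(f_{\eta,\gamma}(1))$ via Equation~\eqref{eq:mineq0} (for Cases 2 and 3). The single new ingredient I would introduce is the remark that preperiodicity of a point forces the valuations along its forward orbit to be bounded below: if $x_0$ is preperiodic under $f_{\eta,\gamma}$, then the forward orbit $\{f^n_{\eta,\gamma}(x_0)\}_{n\geq 0}$ is a finite set, so the sequence $\{v_p(f^n_{\eta,\gamma}(x_0))\}_n$ assumes only finitely many values and is in particular bounded below.

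With that in hand, suppose $(\eta,\gamma)$ solves $H^{(n_0,m_0)}(a,c)=K^{(n_1,m_1)}(a,c)=0$, so that both $0$ and $1$ are preperiodic under $f_{\eta,\gamma}$, and suppose $(\eta,\gamma)$ is not both $p$-integral. Then at least one of $v_p(\eta)$, $v_p(\gamma)$ is negative, placing us in one of Cases 1 through 4 of Proposition~\ref{prop:pintegral}. I would dispatch the three excluded cases directly. In Case 1 ($v_p(\eta)<0$ and $v_p(\gamma)<0$) the induction shows $v_p(f^{n+1}_{\eta,\gamma}(0))<v_p(f^n_{\eta,\gamma}(0))$, so the valuations along the orbit of $0$ strictly decrease and are unbounded below, contradicting the preperiodicity of $0$ guaranteed by $H^{(n_0,m_0)}=0$. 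Cases 2 and 3 (respectively $v_p(\eta)<0\leq v_p(\gamma)$, and $v_p(\eta)=0$ with $v_p(\gamma)<0$) are handled identically with the roles of $0$ and $1$ interchanged: the corresponding inductions give $v_p(f^{n+1}_{\eta,\gamma}(1))<v_p(f^n_{\eta,\gamma}(1))$, contradicting the preperiodicity of $1$ guaranteed by $K^{(n_1,m_1)}=0$.

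Having eliminated Cases 1, 2, and 3, the only surviving non-integral possibility is Case 4, namely $v_p(\gamma)<0<v_p(\eta)$, which is exactly the alternative permitted in the statement. A quick enumeration of the sign patterns of $(v_p(\eta),v_p(\gamma))$ confirms that what remains after excluding those three cases is precisely $\{v_p(\eta)\geq 0,\ v_p(\gamma)\geq 0\}$ together with $\{v_p(\gamma)<0<v_p(\eta)\}$, yielding the claimed dichotomy. I expect the only delicate point to be verifying that the monotonicity inductions of Cases 1--3 are genuinely independent of the periodicity hypothesis, so that they remain valid once periodicity is weakened to preperiodicity; once this is checked, the boundedness observation closes each case immediately, and no analysis of the Case~4 subcases (4.i--4.iii) is required, since the statement permits all of $v_p(\gamma)<0<v_p(\eta)$.
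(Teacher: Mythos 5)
Your proposal is correct and follows essentially the same route as the paper: the paper's proof likewise assumes the conclusion fails, places $(\eta,\gamma)$ in Cases 1, 2, or 3 of Proposition~\ref{prop:pintegral}, and derives a contradiction from the fact that a preperiodic critical point has a finite orbit and hence bounded valuations, leaving Case 4 as the only non-integral possibility. Your extra care in checking that the Case 1--3 inductions never invoke periodicity is exactly the observation the paper makes (more tersely) in the paragraph preceding the proposition.
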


\begin{proof}
Suppose not. Then, we are in one of Cases 1, 2, or 3 from the proof of Proposition \ref{prop:pintegral}. In these cases, we showed that the valuation of the orbit of a critical point under $f_{\eta, \gamma}$ is unbounded. Thus, $f_{\eta, \gamma}$ can not be PCF, because if the orbit of the critical point is finite the valuation of the orbit is necessarily bounded, which is a contradiction.
\end{proof}

Using similar methods as for Proposition \ref{prop:pintegral}, we also prove the following lemma, which will be important in Section \ref{rigidity}.

\begin{lem} \label{lem:v(a)eq0}
Let $p$ be an IDF prime for (d,k) and let $f_{a,c}(z) = a\mathcal{B}_{d,k}(z) + c$. If $(\alpha, \beta)$ is a solution to $F_n(a,c) = G_m(a,c) = 0$, then $\alpha$ is non-zero modulo $p$.
\end{lem}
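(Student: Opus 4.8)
The plan is to argue by contradiction, leveraging the integrality we have already obtained in Proposition~\ref{prop:pintegral}. That proposition tells us any solution $(\alpha,\beta)$ to $F_n(a,c)=G_m(a,c)=0$ is $p$-adically integral, so (as in the previous proofs) I may assume $\alpha,\beta\in\QQ$ and work with $p$ directly, knowing both have nonnegative valuation. The lemma asks for the stronger statement $v_p(\alpha)=0$, so suppose instead that $v_p(\alpha)>0$, i.e.\ $\alpha\equiv 0 \pmod p$. Recalling from the discussion following the definition of IDF primes that the coefficients $b_0,\dots,b_k$ of $\mathcal{B}_{d,k}(z)$ are all $p$-integral (their denominators $(k-i)!\,i!$ involve only primes at most $k<p$), each product $\alpha b_i$ then has positive valuation. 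Hence the reduction of $f_{\alpha,\beta}(z)=\alpha\mathcal{B}_{d,k}(z)+\beta$ modulo $p$ is the \emph{constant} map $z\mapsto \overline{\beta}$.

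First I would confirm that reducing the orbit relations modulo $p$ is legitimate: because $0$ and $1$ are periodic, their forward orbits are finite, and every orbit point is a polynomial expression in $\alpha,\beta$ with $p$-integral coefficients, hence itself $p$-integral. Thus the periodicity relations $f^n_{\alpha,\beta}(0)=0$ and $f^m_{\alpha,\beta}(1)=1$, with $n,m\geq 1$, may be reduced modulo $p$. Since a single application of $f_{\alpha,\beta}$ sends any $p$-integral input to $\overline{\beta}$ modulo $p$, and since $n,m\geq 1$, we get $f^n_{\alpha,\beta}(0)\equiv\overline{\beta}$ and $f^m_{\alpha,\beta}(1)\equiv\overline{\beta}\pmod p$. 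Combining these with the orbit relations forces $0\equiv\overline{\beta}$ and $1\equiv\overline{\beta}\pmod p$, whence $0\equiv 1 \pmod p$. This is impossible because $p>k\geq 1$ gives $p\geq 2$, so $p\nmid 1$. The contradiction shows $v_p(\alpha)=0$, i.e.\ $\alpha$ is nonzero modulo $p$. The general algebraic case is identical after replacing $p$ by a prime ideal $\mathfrak{p}$ lying above it.

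I do not expect any serious obstacle here, as the argument collapses once integrality is in hand; the only points requiring care are that reducing $f_{\alpha,\beta}$ to a constant uses \emph{both} the $p$-integrality of $\alpha$ and that of the Belyi coefficients $b_i$, and that the clean contradiction $0\equiv 1$ comes from using the periodicity of \emph{both} critical points simultaneously. Indeed, a single periodic critical point already forces $\overline{\beta}\in\{0,1\}$, and it is precisely the presence of the second periodic critical point that produces the incompatibility, highlighting why the hypothesis $F_n(a,c)=G_m(a,c)=0$ (rather than just one of the two) is what the lemma needs.
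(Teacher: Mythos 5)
Your proof is correct, and it establishes the lemma by a more economical route than the paper's. The paper does not invoke Proposition~\ref{prop:pintegral} as a black box; it reruns the valuation machinery of that proposition's proof: it first derives the auxiliary inequality \eqref{eq:mingeq0} (the analogue of \eqref{eq:minleq0} for inputs of positive valuation), then splits into two cases according to $v_p(\beta)$. When $v_p(\alpha)>0$ and $v_p(\beta)>0$, an induction shows $v_p(f^n_{\alpha,\beta}(1))>0$ for all $n\geq 1$, contradicting $f^m_{\alpha,\beta}(1)=1$; when $v_p(\alpha)>0$ and $v_p(\beta)=0$, an induction shows $v_p(f^n_{\alpha,\beta}(0))=0$ for all $n\geq 1$, contradicting $f^n_{\alpha,\beta}(0)=0$; and the possibility $v_p(\beta)<0$ is dispatched by citing Cases 3 and 4 of the proof of Proposition~\ref{prop:pintegral}. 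Your argument collapses all of this: taking integrality from Proposition~\ref{prop:pintegral} as given, the hypothesis $v_p(\alpha)>0$ together with the $p$-integrality of the coefficients $b_i$ makes the reduced map $\overline{f}_{\alpha,\beta}$ the constant $\overline{\beta}$, and the two periodicity relations then give $0\equiv\overline{\beta}\equiv 1\pmod{p}$ at once --- no case split on $v_p(\beta)$, and no inductive valuation estimates beyond the observation that integral points have integral orbits. The mathematical content is the same (your closing remark that a single periodic critical point only pins $\overline{\beta}$ to $0$ or to $1$ is precisely the paper's case dichotomy read contrapositively), but your congruence phrasing is shorter and makes the underlying mechanism --- degeneration of the map modulo $p$ --- transparent. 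What the paper's per-case organization buys is explicit information about which conclusions survive when only one critical point is assumed periodic, which is the theme of the two results immediately following Proposition~\ref{prop:pintegral}; for the lemma as stated, your proof is complete.
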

\begin{proof}
Since the proof for an algebraic extension is not substantially different from the rational case, we will assume $\alpha$ and $\beta$ are rational for simplicity.

We begin by establishing an equation similar to Equation ~\eqref{eq:minleq0} for $x$ with positive valuation.

Let $x$ be an arbitrary element in $\mathbb{Q}(\alpha, \beta)$, and let $v_p$ be a normalized valuation on $\mathbb{Q}(\alpha, \beta)$. Using similar logic as we used for Equation ~\eqref{eq:minleq0}, we find that
\begin{equation}\label{eq:mingeq0}
    v_p(f_{\alpha, \beta}(x)) \geq \min \{ v_p(\alpha) + v_p(d-r) + (d-k)v_p(x), v_p(\alpha) + (d-r)v_p(x), v_p(\beta) \}.
\end{equation}
Now, we consider the following two cases:
\begin{align*}
        \text{\textbf{Case 1. }} &v_p(\alpha) > 0,\;  v_p(\beta) > 0 ;\\
        \text{\textbf{Case 2. }} &v_p(\alpha) > 0,\;  v_p(\beta) = 0.
\end{align*}

\textbf{Case 1.} We show $v_p(f^n_{\alpha, \beta}(1))$ is positive for all positive $n$. We begin by using Equation ~\eqref{eq:mineq0}
\begin{align*}
    v_p(f_{\alpha, \beta}(1)) \geq \min \{v_p(\alpha), v_p(\beta) \} > 0.
\end{align*}
Now, for the inductive step, we can use Equation ~\eqref{eq:mingeq0}
\begin{multline*}
    v_p(f^n_{\alpha, \beta}(1)) \geq \min \{ v_p(\alpha) + v_p(d-r) + (d-k)v_p(f^{n-1}_{\alpha, \beta}(1)),\\
    v_p(\alpha) + (d-r)v_p(f^{n-1}_{\alpha, \beta}(1)), v_p(\beta) \},
\end{multline*}
which is positive as every term is positive.

\textbf{Case 2.} For this case, we show that $v_p(f^n_{\alpha, \beta}(0))$ is zero for all positive $n$. As $f_{\alpha, \beta}(0)$ equals $\beta$, $v_p(f_{\alpha, \beta}(0))$ is zero by assumption. For the inductive step, we can use Equation ~\eqref{eq:mineq0}
\begin{align*}
    v_p(f^n(0)) \geq \min \{ v_p(\alpha), v_p(\beta) \} = 0,
\end{align*}
and the inequality becomes equality since the minimum is unique.

In both Case 1 and Case 2, our conclusions contradicted that $0$ and $1$ are periodic, so are not possible.
Combined with cases 3 and 4 in Proposition ~\ref{prop:pintegral}, we see that $v_p(\alpha)$ must be 0, and our desired result is proved.
\end{proof}

\section{Existence of IDF Primes}\label{idfprimes}

As our proof of $p$-integrality relies on the existence of an IDF prime $p$, we turn our attention to proving when such a prime exists. We begin proving the existence of an IDF prime by considering the cases where $k$ is small.

%Let us define a square-free factor of $n$ to be a prime divisor $p$ such that $p$ divides $n$ but $p^2$ does not. A square-free prime divisor of the product $(d-k)\cdots d$ is an IDF prime provided that it does not divide $d-1$. The above conjecture would imply the existence of a simple prime divisor of $(d-k)\cdots(d-2)$ provided that $k$ is greater than 5. This simple prime divisor will be an IDF prime if $k$ is not prime. The existence of an IDF prime can therefore be viewed as a weakening of Conjecture ~\ref{conj:erdos} in most cases.

\begin{lem}\label{lem:k<4}
For $k \leq 3$, there exists an IDF prime $p$ for $(d,k)$ except when $(d, k)$ equals $(27, 3)$. Further, for any given $k$, there are only finitely many $d$ for which an IDF prime does not exist.
\end{lem}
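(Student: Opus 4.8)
The plan is to translate the non-existence of an IDF prime for $(d,k)$ into a system of Diophantine conditions, one for each admissible index $r$, and then to show these conditions are too restrictive to hold for large $d$ (giving the finiteness) or for $k \le 3$ with $d \neq 27$ (giving the explicit statement). Since $p > k$ forces the index $r$ with $p \mid (d-r)$ to be unique and $r \neq 1$, the admissible indices are $r \in \{0\} \cup \{2, 3, \ldots, k\}$. For $r = 0$ the first two conditions just say that some prime $p > k$ divides $d$, so failure of $r = 0$ to produce an IDF prime is exactly the statement that $d$ is $k$-smooth (all prime factors are $\le k$). For $r \ge 2$, a prime $p > k$ dividing $d - r$ to the first power is automatically IDF (as $r \nmid 1$), so failure at $r$ means every prime $> k$ dividing $d - r$ occurs to a power divisible by $r$; equivalently, the prime-to-$k$ part of $d - r$ is a perfect $r$-th power. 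Thus ``$(d,k)$ has no IDF prime'' implies $d$ is $k$-smooth and, for each $r \in \{2, \ldots, k\}$, the prime-to-$k$ part of $d-r$ is a perfect $r$-th power.

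For the general finiteness statement (fixed $k$) I would use only the indices $r = 0$ and $r = 2$. Assume $(d,k)$ has no IDF prime and $k \ge 2$ (the case $k = 1$ is immediate, as every admissible $d \ge 3$ has a prime factor $\ge 2 > 1$, giving an IDF prime at $r = 0$). Then $d$ is $k$-smooth and the prime-to-$k$ part of $d-2$ is a square; writing $S = \{p : p \le k\}$ and letting $m_0$ be the squarefree part of $d - 2$ (a squarefree $S$-smooth number), we get $d - 2 = m_0 w^2$, hence $d = m_0 w^2 + 2$. Since $m_0$ ranges over the finite set of squarefree $S$-units and $d$ must itself be $S$-smooth, it suffices to observe that for each fixed $m_0$ the quadratic $m_0 w^2 + 2$ has nonzero discriminant $-8m_0$, so by the classical theorem that the greatest prime factor of a quadratic polynomial tends to infinity (see \cite{GreatestPrime}), the value $m_0 w^2 + 2$ is $S$-smooth for only finitely many $w$. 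Hence only finitely many $d$, which proves the second claim.

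For the explicit $k \le 3$ statement I would argue by cases on $k$. The case $k = 1$ has no exceptions as above. For $k = 2$, if $d$ is not $2$-smooth then $r = 0$ works; otherwise $d = 2^a$ with $a \ge 3$, and $d - 2 = 2(2^{a-1} - 1)$ has prime-to-$2$ part $2^{a-1} - 1$. A check modulo $8$ shows $2^{a-1} - 1$ is never a perfect square for $a \ge 3$, so it has a prime factor $> 2$ to an odd power, giving an IDF prime at $r = 2$; thus $k = 2$ has no exceptions. For $k = 3$, if $d$ is not $3$-smooth then $r = 0$ works; otherwise $d = 2^a 3^b$, and failure of $r = 2$ forces $d - 2 = m_0 w^2$ with $m_0 \in \{1, 2, 3, 6\}$, i.e.\ a Ramanujan--Nagell-type equation $2^a 3^b = m_0 w^2 + 2$. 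Each of these four equations has finitely many, explicitly determinable solutions (the principal one being $w^2 + 2 = 2^a 3^b$, with sub-cases $u^2 + 2 = 3^b$ and $2u^2 + 1 = 3^b$), which I would resolve by elementary congruences together with standard results on Lebesgue--Nagell equations $y^2 + 2 = 3^b$. Running through the finite list of resulting $d$ and imposing the remaining $r = 3$ condition (prime-to-$3$ part of $d - 3$ a perfect cube) then leaves $d = 27$ as the only admissible exception.

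The main obstacle is the explicit resolution in the case $k = 3$: although the reduction via $r = 2$ cuts the problem down to finitely many $3$-smooth candidates, pinning down \emph{exactly} which $d$ survive both the square condition on $d - 2$ and the cube condition on $d - 3$ requires solving the associated exponential Diophantine equations completely, and it is precisely the interplay of these two perfect-power conditions that isolates the single exception $(27,3)$. A secondary limitation, inherited from the greatest-prime-factor input, is that the finiteness for general $k$ is ineffective, which is why the statement only asserts finiteness rather than giving an explicit bound on the exceptional $d$.
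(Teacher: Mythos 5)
Your reduction is correct and arrives at the right exception, but by a genuinely different route than the paper. The paper does not begin with the $r=0$ smoothness condition: for $k=2$ it writes $d=2^i$ and appeals to Zsigmondy's theorem plus a small computation, and for $k=3$ it intersects the failure conditions at $r=2$ and $r=3$, writing $d-2=BY^2$ and $d-3=CX^3$ and solving $BY^2=CX^3+1$ by computing integral points on the $36$ elliptic curves $y^2=x^3+B^3C^2$ in Sage; only afterwards does it discard the non-smooth candidates $d\in\{11,51,291,12170,453965\}$ using the $r=0$ condition, and the fixed-$k$ finiteness is asserted to follow from the same elliptic-curve method. You impose $r=0$ and $r=2$ first, so for $k=3$ the elliptic curves are replaced by the four exponential equations $2^a3^b=m_0w^2+2$, and finiteness for general $k$ comes from the greatest-prime-factor theorem for irreducible quadratics rather than from finiteness of integral points. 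Your route buys two things: the mod-$8$ argument for $k=2$ is both simpler and more complete than the paper's, since a primitive prime divisor of $2^{i-1}-1$ furnished by Zsigmondy need not occur to an odd power --- what is actually needed is that $2^{i-1}-1$ is not a perfect square, which is exactly what you prove; and your candidate list is disjoint from the paper's (you must see and then eliminate $d=8,18,486$ via the cube condition at $r=3$, values invisible to the paper's method, while you never encounter $11$, $51$, etc.). What the paper's route buys is effectivity: integral points on explicit elliptic curves are algorithmically computable, whereas the greatest-prime-factor input is ineffective, as you concede.

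Two corrections are needed. First, the citation \cite{GreatestPrime} is wrong for your purpose: that paper concerns the greatest prime divisor of a product of terms in an arithmetic progression, not values of an irreducible quadratic polynomial; the theorem you want is classical (Mahler, Nagell). Second, and more substantively, the $k=3$ case is a plan rather than a proof at exactly the point you flag: of the equations you must solve completely, only $w^2+2=3^b$ has the Lebesgue--Nagell shape you cite. The equations $2u^2+1=3^b$ (solutions $u=1,2,11$, producing the candidates $d=18$ and $d=486$) and $3w^2+1=2^{a-1}$ (from $m_0=6$, producing $d=8$) ask for pure powers inside Pell-type sequences, and proving that the listed solutions are the only ones requires a descent in $\mathbb{Z}[\sqrt{-2}]$ or a primitive-divisor argument of difficulty comparable to the paper's elliptic-curve computation. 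Since the explicit claim that $(27,3)$ is the unique exception for $k\le 3$ rests entirely on the complete resolution of these equations, that step must be carried out or precisely cited before your argument is a proof.
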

\begin{proof}
The case where $k$ equals 1 is trivial, so we begin with $k=2$. For this case, the conditions for being an IDF prime $p$ are equivalent to
\begin{itemize}
    \item $p$ is greater than 2
    \item $p$ divides $d$, or $p$ divides $d-2$ and $v_p(d-2)$ is odd.
\end{itemize}
If $d$ has a prime divisor greater than $2$, we are done. Otherwise, $d = 2^i$ for some $i$. Then, we can apply Zsigmondy's theorem \cite{Zsigmondy} to conclude that $d-2 = 2(2^{i-1}-1)$ has a primitive prime divisor when $i$ is greater than $6$. Checking the remaining cases computationally where $i$ is less than 6, we find that there is always an IDF prime.
\begin{comment}
for i in range(2,7):
    d=ZZ(2)^i
    for p in d.prime_divisors():
        if p > 2:
            print(d,p, (d-2).valuation(p))
    for p in (d-2).prime_divisors():
        if p > 2:
            print(d,p, (d-2).valuation(p))
\end{comment}

We now approach the case $k=3$. For this case, the conditions for being an IDF prime are equivalent to
\begin{itemize}
    \item $p$ is greater than 3
    \item $p$ divides d, or $p$ divides $d-2$ and $v_p(d-2)$ is odd, or $p$ divides $d-3$ and $3 \nmid v_p(d-3)$.
\end{itemize}
We proceed by directly computing exceptions. If either $d-3$ or $d-2$ is a multiple of an IDF prime, then we are done. Otherwise, $d- 3$ is not divisible by an IDF prime, so every prime $p$ which is greater than 3 must have $3 \mid v_p(d-3)$. Hence, every prime greater than 3 occurs to a power divisible by 3. This implies that $d - 3 = CX^3$ for some integer $X$, where $C$ is only divisible by $2$ or $3$ at most to the second power (as higher powers of $2$ or $3$ can be absorbed into $X$), so that $C$ is in the set $\{1, \, 2, \,  3, \, 2^2, \, 2\cdot 3, \, 3^2, \, 2^2 \cdot 3, \, 2 \cdot 3^2, \, 2^2 \cdot 3^2\}$.
Similarly, we must have that $d-2 = BY^2$ for some integer $Y$, where $B$ is in the set $\{1, \, 2, \, 3, \, 6\}$. Combining these two equations, we find that integer pairs $(X, Y)$ must satisfy
\begin{align*}
    BY^2 = CX^3 + 1.
\end{align*}

After the substitution
\begin{equation}\label{eq_coords}    X = \frac{x}{BC}\quad \text{and} \quad Y = \frac{y}{B^2C},
\end{equation}
this curve becomes $y^2 = x^3 + B^3C^2$. We find points $(x, y)$ using standard methods to compute integral points on elliptic curves in Sage \cite{sage}, and then compute the points $(X,Y)$ via \eqref{eq_coords}. Note that we have that $k$ must be less than or equal to $\left \lceil \frac{d-2}{2} \right \rceil$, so as $k$ is $3$, $d$ must be at least 7. We find that the only solutions are
\begin{align*}
(X,Y, B, C) \in \{ (2,3, 1,1), (2,5,1,3), (2,7,1,6), (2, 17,1,36), (23, 78,2,1), (61, 389,3,2) \}
\end{align*}
These correspond to the tuples
\begin{align*}
(d, 3) \in \{ (11, 3), (27, 3), (51, 3), (291, 3), (12170, 3), (453965, 3) \}
\end{align*}
For these tuples $(d,3)$, $d$ is a multiple of an IDF prime except when $d = 27$, giving us one exception $(27, 3)$.
\begin{comment}
B = [1,2,3,6]
C = [1,2,3,4,6,9,12,18,36]
R.<x,y>=QQ[]
for b in B:
    for c in C:
        E=EllipticCurve([0,b^3*c^2])
        for P in E.integral_points():
            xP,yP = P[0],P[1]
            if xP*yP != 0:
                X=xP/(b*c)
                Y=yP/(b^2*c)
                if X>0 and int(X) == X and int(Y) == Y and c*X^3 >= 4:
                    print("(X,Y,B,C,d)=",X,Y,b,c,c*X^3+3)
\end{comment}

The above method easily generalizes to any $k$, showing that there are only finitely many possible exceptions $(d,k)$ for any given $k$.
\end{proof}

On the other hand, when $k$ is very large, we can show there is always an IDF prime $p$. Note that in the following lemma, ``effectively computable'' means that the method of proof allows for the explicit computation of $\gamma$.

\begin{lem}\label{lem:k>4}
If $k$ is greater than some effectively computable constant $\gamma$, there is always an IDF prime $p$ for $(d,k)$.
\end{lem}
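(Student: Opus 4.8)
The plan is to show that, for $k$ larger than an effectively computable constant, one of the $k-1$ consecutive integers $d-2, d-3, \ldots, d-k$ carries a prime factor witnessing the IDF condition. The basic reduction is the following. Since $1 \le k \le \left\lceil \frac{d-2}{2} \right\rceil$ we have $d \ge 2k+1$, so every one of these integers exceeds $k$; and since the interval has length $k-1 < p$ for any prime $p > k$, such a $p$ divides at most one of the $d-r$. Consequently, if a prime $p > k$ divides some $d-r$ to the first power, then automatically $r \nmid v_p(d-r) = 1$ (recall $r \ge 2$), so $p$ is an IDF prime for $(d,k)$. Thus it suffices to produce, for $k$ large, an index $2 \le r \le k$ and a prime $p > k$ with $v_p(d-r) = 1$, or more generally with $r \nmid v_p(d-r)$.

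First I would dispose of the range $d \le k^2$. Here any prime $p > k$ satisfies $p^2 > k^2 \ge d > d-r$, so $p$ cannot divide $d-r$ to a power exceeding $1$. Hence it is enough to find a single prime exceeding $k$ dividing the product $(d-k)(d-k+1)\cdots(d-2)$. As these are $k-1$ consecutive integers all exceeding $k$, the classical theorem of Sylvester and Erd\H{o}s on prime factors of products of consecutive integers guarantees such a prime once $k$ exceeds an absolute constant, completing this range.

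For the complementary range $d > k^2$ I would sharpen the target: here $\sqrt{d} > k$, and any prime $p > \sqrt{d}$ satisfies $p^2 > d$, so if $p \mid d-r$ then $v_p(d-r) = 1$ and $p > k$ is an IDF prime. The problem therefore reduces to showing that not all of $d-2, \ldots, d-k$ are $\sqrt{d}$-smooth, equivalently that the greatest prime factor of $\prod_{r=2}^{k}(d-r)$ exceeds $\sqrt{d}$. I would establish this from the effective lower bounds for the greatest prime factor of a product of consecutive integers in the literature on this problem \cite{GreatestPrime, FiniteSolutions}; the effectivity of those bounds is exactly what yields an effectively computable $\gamma$, and is consistent with the connection to the Erd\H{o}s--Selfridge circle of ideas (Conjecture \ref{conj:erdos}) and to finiteness results for perfect powers in such products \cite{ProductNeverPower}.

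The main obstacle is this last step: one needs a lower bound for the greatest prime factor that is uniform in $d$ and strong enough to beat the threshold $\sqrt{d}$ for every $d > k^2$ simultaneously, rather than merely beating $k$ as Sylvester--Erd\H{o}s provides. Controlling smooth numbers in the short interval $[d-k, d-2]$, whose length $k-1$ may be tiny compared with $d$, is delicate, and the strongest uniform statements here rest on Baker-type estimates for linear forms in logarithms; this is both where the genuine number-theoretic difficulty lies and the source of the large constant $\gamma$. One could alternatively route the $d > k^2$ case through the power structure forced by negating the conclusion -- namely that each $d-r$ possessing a prime factor $p > k$ must satisfy $p^r \mid d-r$, so that $d-r \ge (k+1)^r$ and hence $d-r$ is $k$-smooth for every $r$ beyond $\log d / \log(k+1)$ -- but this still leaves the regime of very large $d$ dependent on the same deep input.
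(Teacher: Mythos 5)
Your Case 1 ($d \le k^2$) is essentially sound: there any prime $p > k$ dividing some $d-r$ automatically has $v_p(d-r)=1$ since $p^2 > k^2 \ge d$, and a Sylvester-type theorem produces such a prime for $k$ large (modulo the minor point that Sylvester applied to the $k-1$ integers $d-k,\ldots,d-2$ only yields a prime $> k-1$; one needs a refinement such as Laishram--Shorey's $P(\Delta) > 1.95k$ to get strictly past $k$). The gap is in Case 2 ($d > k^2$), and it is fatal as written. You need the greatest prime factor of $(d-k)\cdots(d-2)$ to exceed $\sqrt{d}$ \emph{uniformly} in $d$, i.e., that a short interval of length $k-1$ near $d$ never consists entirely of $\sqrt{d}$-smooth numbers. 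No result of this strength exists, and the references you invoke do not supply it: Laishram--Shorey \cite{GreatestPrime} gives lower bounds of the shape $P(\Delta) > ck$ (growing with $k$, not with $d$), while the bounds that do grow with $d$, obtained from Baker-type or $p$-adic linear forms in logarithms, are of order roughly $k\log\log d$ --- even for $P(n(n+1))$ nothing remotely like a fixed power of $n$ is known. So the threshold $\sqrt{d}$ is out of reach when $d$ is enormous relative to $k$, and your proposed fallback (forcing $p^r \mid d-r$, hence $k$-smoothness of $d-r$ for $r$ beyond $\log d/\log(k+1)$) leaves exactly this regime open, as you yourself concede.

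The paper escapes the need for any bound growing with $d$ by a parity trick you are missing. Restrict to \emph{even} indices $r$: for even $r$, the IDF condition $r \nmid v_p(d-r)$ is already implied by $v_p(d-r)$ being \emph{odd}, since a multiple of an even number is even. So consider
\[
\Delta = d(d-2)(d-4)\cdots\left(d-2\left\lfloor \tfrac{k}{2} \right\rfloor\right),
\]
a product of terms in arithmetic progression with common difference $2$. Laishram--Shorey \cite{GreatestPrime} gives, for every $d$, a prime $p > 2\left(\left\lfloor \tfrac{k}{2}\right\rfloor + 1\right) \ge k$ dividing $\Delta$. If every such prime had even valuation in the term it divides, then $\Delta = by^2$ with $b$ having no prime factor above that threshold; and Filaseta--Laishram--Saradha \cite{FiniteSolutions} prove, effectively, that this equation has no solutions once $k$ exceeds a computable constant --- uniformly in $d$. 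The point is that negating ``odd valuation'' simultaneously over all even $r$ collapses into a \emph{single} Diophantine equation (a product in arithmetic progression is a square times a smooth number) that has been solved effectively, whereas negating your condition ``valuation equals $1$'' only gives $v_p \ge 2$, and the mixed conditions $d-r = b_r y_r^r$ with varying exponents $r$ match no known theorem.
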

\begin{proof}
To approach this more general case, we show that there exists a prime $p$ such that $p$ is greater than $k$, $p$ divides $d-r$ for some even $r$ less than $k$, and $v_p(d-r)$ is odd. Consider the product
\begin{align*}
\Delta = d(d-2)(d-4) \cdots \left (d-2 \left \lfloor \frac{k}{2} \right \rfloor \right).
\end{align*}
First we show that there exists a prime $p > k$ which divides $\Delta$. Note that $\Delta$ is the product of an arithmetic sequence with common ratio $2$. Laishram and Shorey \cite{GreatestPrime} prove for arithmetic sequences with common ratio $2$, there exists
a prime $p > 2 \left ( \left \lfloor \frac{k}{2} \right \rfloor + 1 \right) \geq k$ dividing $\Delta$.

Next we show that if $k$ is large enough, there exists a prime $p > k$ such that $v_p(d-r)$ is odd. Assume that for all primes $p >  2 \left ( \left \lfloor \frac{k}{2} \right \rfloor + 1 \right)$ dividing $\Delta$, $v_p(d-r)$ is even. Then, we would have a solution to
\begin{align} \label{eq:by^2}
\Delta = by^2
\end{align}
where $b$ has no prime divisor greater than $2 \left ( \left \lfloor \frac{k}{2} \right \rfloor + 1 \right)$. Filaseta, Laishram, and Saradha \cite{FiniteSolutions} prove that when $k$ is larger than some effectively computable constant, there are no solutions to Equation \eqref{eq:by^2}.
\end{proof}

Combining Lemma ~\ref{lem:k>4} and Lemma ~\ref{lem:k<4}, we have the following proposition.

\begin{prop}
Except for finitely many exceptions $(d,k)$ which are effectively computable, there exists an IDF prime $p$ for $(d,k)$.
\end{prop}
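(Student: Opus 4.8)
The plan is to prove the proposition by combining the two preceding lemmas, splitting the parameter space according to the size of $k$. Let $\gamma$ denote the effectively computable constant produced by Lemma~\ref{lem:k>4}. First I would dispose of all pairs with $k > \gamma$: for these Lemma~\ref{lem:k>4} guarantees an IDF prime for \emph{every} degree $d$, so this range contributes no exceptions whatsoever. It then remains only to control the finitely many values $k \in \{1, 2, \ldots, \gamma\}$.

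For each fixed $k$ in this finite range, I would invoke the second assertion of Lemma~\ref{lem:k<4}, that for any given $k$ there are only finitely many $d$ admitting no IDF prime. Although that lemma is stated for $k \le 3$, its proof generalizes directly: whenever $k \ge 3$ the indices $r = 2$ and $r = 3$ are available, so the absence of an IDF prime forces both $d - 2 = B Y^2$ with $B$ a squarefree product of primes $\le k$, and $d - 3 = C X^3$ with $C$ a cubefree product of primes $\le k$. There are finitely many such $B$ and $C$, and each choice yields, after the substitution used in Lemma~\ref{lem:k<4}, a Mordell curve $y^2 = x^3 + B^3 C^2$ whose integral points are effectively computable. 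Filtering the resulting finite list of candidate degrees by the remaining IDF conditions (for instance discarding those $d$ divisible by a prime exceeding $k$, which already furnish an IDF prime with $r = 0$) leaves a finite, effectively computable exceptional set $E_k$ for each $k$.

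The exceptional set for the proposition is therefore contained in the finite union $\bigcup_{k=1}^{\gamma}\{(d,k) : d \in E_k\}$, hence is finite. Effectiveness follows from effectiveness of the inputs: $\gamma$ is effectively computable by Lemma~\ref{lem:k>4}, and for each of the finitely many $k \le \gamma$ the set $E_k$ is effectively computable by the generalized argument above. One simply runs that procedure for $k = 1, \ldots, \gamma$ and collects the output.

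I expect no substantial obstacle, since the analytic and Diophantine content is already carried by the ingredients (Zsigmondy, Laishram--Shorey, and Filaseta--Laishram--Saradha). The only points demanding care are confirming that the constant $\gamma$ coming from Filaseta--Laishram--Saradha is genuinely effective, and that the reduction to integral points remains effective for every fixed $k$; the observation that the $r = 2$ and $r = 3$ conditions alone already produce Mordell curves---rather than higher-genus curves---keeps this last step within reach of effective (Baker-type) integral-point methods uniformly in $k$.
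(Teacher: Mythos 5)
Your proposal is correct and follows essentially the same route as the paper: the paper derives the proposition by combining Lemma~\ref{lem:k>4} (no exceptions for $k$ beyond the effective constant $\gamma$) with the second assertion of Lemma~\ref{lem:k<4}, whose proof the paper notes ``easily generalizes to any $k$'' via exactly the Mordell-curve reduction you describe. Your elaboration of that generalization (using $r=0$, $r=2$, $r=3$ and the curves $y^2 = x^3 + B^3C^2$) is just a more explicit account of what the paper leaves implicit.
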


Ideally, we would enumerate all $k$ less than $\gamma$ and compute all possible exceptions $(d,k)$. The constant $\gamma$, however, is quite large, of the order of $50^{50}$. It is therefore infeasible to compute all exceptions.

Additionally, we note that the method used in Lemma ~\ref{lem:k<4} to compute the possible exceptions $(d,k)$ requires computing integral points on $6^{\pi(k)}$ elliptic curves, where $\pi(k)$ is the number of primes less than or equal to $k$. Thus, the authors were only able to compute exceptions up until $k = 10$. We found that for all pairs $(d,k)$ with $k \leq 10$ except for $(27,3)$, there exists an IDF prime.

A more efficient method would be as follows. If $d$ is not a multiple of an IDF prime, it is the product of primes less than or equal to $k$. If $d-2$ is not a multiple of an IDF prime, then it is of the form $BX^2$, where $B$ is in the set described in Lemma ~\ref{lem:k<4}. We thus get an equation of the form
\begin{align*}
    BX^2 + 2 = p_1^{z_1} \cdots p_n^{z_n}.
\end{align*}
Equations of this form are solved in \cite{PrimePowerDiophantine}. For any given $k$, there will be $\mathcal{O}(\pi(k)^2)$ possibilities for $B$, giving a much more efficient method for computing exceptions.

As computing all exceptions is not currently feasible, we instead state Conjecture \ref{conj:goodprime}. In the spirit of Conjecture ~\ref{conj:erdos}, we rephrase Conjecture ~\ref{conj:goodprime} to have a more number theoretic flavor.

\begin{conj}
Let $k$ be a non-negative integer and let $n$ be greater than $2k+2$. There exists a prime $p$ greater than $k$ which divides $\Delta = n(n-1)\cdots (n-k)$. Moreover, the index at which $p$ occurs in the product $n(n-1)\cdots (n-k)$ does not divide the power to which it occurs in $\Delta$, except when $(n,k)$ equals $(27, 3)$.
\end{conj}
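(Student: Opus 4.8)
The plan is to read this statement as nothing more than Conjecture~\ref{conj:goodprime} rephrased in purely number-theoretic language, and then to attack it by a case analysis on the size of $k$. Setting $d = n$ and indexing the $k+1$ terms $n, n-1, \dots, n-k$ by $r = 0, 1, \dots, k$, a prime $p > k$ divides at most one term $n-r$ (consecutive terms differ by less than $p$), so the power to which $p$ occurs in $\Delta$ is exactly $v_p(n-r)$; the hypothesis $n > 2k+2$ keeps $(n,k)$ inside the admissible range $1 \le k \le \lceil \frac{d-2}{2} \rceil$. Under this dictionary an IDF prime for $(n,k)$ is precisely a prime $p>k$ whose index $r$ of occurrence fails to divide its exponent $v_p(n-r)$, and the assertion becomes: such a prime exists for every admissible $(n,k)\neq(27,3)$. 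Note that index $r=0$ is automatically good whenever $n$ has a prime factor exceeding $k$ (since $0 \nmid v_p(n)$), and index $r=1$ is never good (since $1$ divides every exponent).

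First I would dispose of the two extreme ranges using results already in hand. For $k$ above the effectively computable constant $\gamma$ of Lemma~\ref{lem:k>4}, the statement holds outright: restricting $\Delta$ to its even indices and combining the greatest-prime-factor theorem of Laishram and Shorey with the nonexistence of solutions to $\Delta = by^2$ of Filaseta, Laishram, and Saradha produces a prime $p>k$ occurring at an even index to an odd power, and an odd exponent is never divisible by an even index. For $k$ small I would run the enumeration behind Lemma~\ref{lem:k<4}: if index $0$ fails (that is, $n$ has no prime factor exceeding $k$) and no larger index supplies an IDF prime, then each relevant $n-r$ is forced into the shape (bounded $S$-unit factor)$\times$(perfect power), reducing the problem to finitely many Mordell/Thue equations whose integral points can be computed directly, leaving only the single exception $(27,3)$.

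The intermediate ground is where the Erd\H{o}s--Selfridge circle of ideas enters. A prime dividing $\Delta$ to the first power at any index $r \ge 2$ is automatically IDF, since then $r \nmid 1$; consequently Conjecture~\ref{conj:erdos} implies the present statement whenever one can guarantee that a first-power prime lands at an index other than $1$. The genuinely delicate failure modes are exactly those a first-power prime does not rule out: occurrence at index $r=1$, and occurrence at an index $r$ that happens to divide $v_p(n-r)$. Controlling these is the reason Erd\H{o}s--Selfridge only yields the conjecture under the side conditions $k \ge 5$ and $k$ not prime.

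The main obstacle is the middle range, roughly $11 \le k \le \gamma$. The effective bound $\gamma$ coming from the Diophantine input is astronomically large (on the order of $50^{50}$), while the elliptic-curve enumeration becomes infeasible already near $k=10$, since it demands integral points on about $6^{\pi(k)}$ curves. Bridging this gap unconditionally is the crux, and I expect it to require either a dramatic lowering of the effective constants in the theorems of Filaseta--Laishram--Saradha and Laishram--Shorey, or a proof of a suitable form of the Erd\H{o}s--Selfridge conjecture reinforced by the index-$1$ refinement above. Absent such progress, the statement must remain conjectural beyond the extreme ranges handled by Lemmas~\ref{lem:k<4} and~\ref{lem:k>4}.
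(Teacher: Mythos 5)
Your reading is exactly right: this statement is the number-theoretic rephrasing of Conjecture~\ref{conj:goodprime}, which the paper itself leaves unproven, and your dictionary (term $n-r$ at index $r$, unique since $p>k$) together with the partial attack --- small $k$ via the elliptic-curve enumeration of Lemma~\ref{lem:k<4}, very large $k$ via Laishram--Shorey and Filaseta--Laishram--Saradha as in Lemma~\ref{lem:k>4}, and the Erd\H{o}s--Selfridge connection in between --- is precisely the paper's own treatment. Your identification of the infeasible middle range ($\gamma$ of order $50^{50}$ versus computations only up to $k=10$) as the reason the statement must remain conjectural matches the paper's discussion exactly.
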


\section{Thurston Rigidity}\label{rigidity}

Having proved that the PCF solutions are $p$-adically integral, the next step towards proving the main theorem is to prove that the curves $F_n(a,c)$ and $G_m(a,c)$ intersect transversely. In order to show that $F_n$ and $G_m(a,c)$ intersect transversely, we consider the Jacobian
\begin{align*}
J(a,c) = \det \begin{pmatrix}
    (F_n)_a  & (G_m)_a \\
    (F_n)_c  & (G_m)_c
\end{pmatrix} \in \mathbb{Z}[a,c]
\end{align*}
where the subscript indicates a partial derivative. Then the curves $F_n = 0$ and $G_m = 0$ intersect transversely at all their points of intersection if and only if the ideal
\begin{align*}
    (F_n, G_m, J) \in \mathbb{C}[a,c]
\end{align*}
is the unit ideal.  We prove that $(F_n, G_m, J) = (1)$ by proving $J(a,c)$ does not vanish modulo $p$ when $p$ is an IDF prime.

\begin{prop} \label{prop:Jacobianneq0}
Let $p$ be an IDF prime for $(d,k)$, and let $(\alpha, \beta)$ be a solution to $F_n(a,c) = G_m(a,c) = 0$. Then, the Jacobian \begin{align*}
    J(a, c) = \det \begin{pmatrix}
    (F_n)_a & (G_m)_a \\
    (F_n)_c & (G_m)_c
\end{pmatrix}
\end{align*}
is non-zero modulo $p$ when $(a,c)$ equals $(\alpha, \beta)$.
\end{prop}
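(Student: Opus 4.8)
The plan is to reduce everything modulo the IDF prime $p$ and exploit the fact that the reduction of $f_{\alpha,\beta}$ is \emph{inseparable}. By Proposition \ref{prop:pintegral} the solution $(\alpha,\beta)$ is $p$-adically integral, and by Lemma \ref{lem:v(a)eq0} we have $v_p(\alpha)=0$; since the coefficients $b_i$ of $\mathcal{B}_{d,k}$ are $p$-integral, this forces the entire forward orbits $w_i:=f^i_{\alpha,\beta}(0)$ and $u_i:=f^i_{\alpha,\beta}(1)$ to be $p$-integral by induction. The key observation I would establish first is that $\mathcal{B}'_{d,k}(z)\equiv 0 \pmod p$: writing $\mathcal{B}'_{d,k}(z)=\sum_{i=0}^k b_i(d-i)z^{d-i-1}$, every coefficient with $i\neq r$ vanishes because $p\mid b_i$, while the one surviving coefficient $b_r(d-r)$ vanishes because $p\mid(d-r)$. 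Consequently $f'_{\alpha,\beta}(w)\equiv 0\pmod p$ for every $p$-integral $w$, and in particular for every orbit point $w_j$ and $u_j$.

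Next I would expand the partial derivatives of the iterates by the chain rule. Writing $w_i=f^i_{\alpha,\beta}(0)$, one has $(F_n)_a=\sum_{i=0}^{n-1}\bigl(\prod_{j=i+1}^{n-1}f'_{\alpha,\beta}(w_j)\bigr)\mathcal{B}_{d,k}(w_i)$ and $(F_n)_c=\sum_{i=0}^{n-1}\prod_{j=i+1}^{n-1}f'_{\alpha,\beta}(w_j)$, with the analogous formulas for $(G_m)_a$ and $(G_m)_c$ in terms of the $u_i$. Because each factor $f'_{\alpha,\beta}(w_j)$ reduces to $0$ modulo $p$, every summand with $i<n-1$ contains at least one such factor and dies, leaving only the $i=n-1$ term, whose product is empty and hence equal to $1$. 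Thus modulo $p$ we obtain $(F_n)_a\equiv \mathcal{B}_{d,k}(w_{n-1})$, $(F_n)_c\equiv 1$, $(G_m)_a\equiv \mathcal{B}_{d,k}(u_{m-1})$, and $(G_m)_c\equiv 1$.

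Finally I would invoke periodicity to evaluate the two surviving nontrivial entries. Since $0$ has period $n$ we have $f_{\alpha,\beta}(w_{n-1})=0$, i.e.\ $\alpha\mathcal{B}_{d,k}(w_{n-1})=-\beta$; since $1$ has period $m$ we have $f_{\alpha,\beta}(u_{m-1})=1$, i.e.\ $\alpha\mathcal{B}_{d,k}(u_{m-1})=1-\beta$. As $\alpha$ is a unit modulo $p$, these read $\mathcal{B}_{d,k}(w_{n-1})\equiv-\beta/\alpha$ and $\mathcal{B}_{d,k}(u_{m-1})\equiv(1-\beta)/\alpha$, so the Jacobian telescopes:
\[
J(\alpha,\beta)\equiv\det\begin{pmatrix}-\beta/\alpha & (1-\beta)/\alpha\\ 1 & 1\end{pmatrix}=\frac{-\beta-(1-\beta)}{\alpha}=\frac{-1}{\alpha}\pmod p,
\]
which is nonzero precisely because $v_p(\alpha)=0$.

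The computation is short once the reductions are justified, so the genuine content (and the step I expect to be the crux) is the inseparability observation $\mathcal{B}'_{d,k}\equiv 0\pmod p$ combined with the bookkeeping needed to guarantee that reduction commutes with evaluation. Concretely, one must confirm that all orbit points $w_i,u_i$ are $p$-integral so that $f'_{\alpha,\beta}(w_j)\equiv 0$ legitimately annihilates every cross-term in the chain rule, and that the entries $\mathcal{B}_{d,k}(w_{n-1})$ and $\mathcal{B}_{d,k}(u_{m-1})$ are themselves $p$-integral so the final $2\times2$ determinant may be read off modulo $p$; note that no control over orbit collisions is needed, since $f'_{\alpha,\beta}\equiv 0$ on all $p$-integral points regardless. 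Everything else reduces to the determinant above, which collapses to $-1/\alpha$.
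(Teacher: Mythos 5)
Your proof is correct and takes essentially the same approach as the paper: both rest on the observation that modulo an IDF prime the map becomes inseparable (the paper phrases this as $\overline{f}_{a,c}(z) \equiv a s z^{tp} + c$ with $p \mid tp$, you as $\mathcal{B}'_{d,k} \equiv 0 \pmod{p}$ on $p$-integral points), which annihilates all chain-rule cross terms in the partials, after which the periodicity relations and Lemma \ref{lem:v(a)eq0} give $J(\alpha,\beta) \equiv \pm 1/\overline{\alpha} \not\equiv 0 \pmod{p}$. The sign difference from the paper's $1/\overline{\alpha}$ is immaterial, as it comes only from the paper swapping the rows of the matrix in its displayed computation relative to the statement.
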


\begin{proof}
Since the proof for an algebraic extension is not substantially different from the rational case, we will assume $\alpha$ and $\beta$ are rational for simplicity.

Begin by choosing an IDF prime ideal $p$ in $\mathbb{Q}(\alpha, \beta)$, and let $v_p$ be the valuation normalized with respect to this prime ideal. Letting $b_0, \ldots, b_k$ be the coefficients of $B_{d,k}(z)$, we know that $v_p(b_i)$ equals $v_p(d-r)$, except when $i=d-r$, as $v_p(b_{d-r})$ equals $0$. As $v_p(d-r)$ is greater than 0, we have that $\mathcal{B}_{d,k}$ reduces to a monomial modulo $p$ by reducing the coefficients.

Now we can reduce $f_{a, c}$ using the reduction of $\mathcal{B}_{d,k}$:
\begin{equation*}
\overline{f}_{a, c}(z) \equiv as z^{t p}+ c  \;(\bmod\; p),
\end{equation*}
where
\begin{equation*}
s \equiv  (-1)^{k-r}\prod_{j = 0, j \neq r}^k (d - j)\cdot\frac{1}{(k - r)!r!} \;(\bmod\; p)
\end{equation*}
and $t p = d-r$ for some $t \in \mathbb{N}$. Since the critical points of $f$ in this bicritical normal form are 0 and 1, given periods $m,n \geq 1$, the intersection of
\begin{equation*}
    F_n(a,c) = f^n_{a,c}(0) = 0 \quad \text{and} \quad   G_m(a,c) = f^m_{a,c}(1) - 1 = 0
\end{equation*}
gives the locus of PCF bicritical polynomials with 0 periodic of period $n$ and 1 periodic of period $m$. We compute the Jacobian of these two curves and show that it cannot be 0 mod $p$ at the points of intersection. We can explicitly compute the partial derivatives of $\overline{f}^{n}_{a,c}(0)$ and
$\overline{f}^{m}_{a,c}(1)$ as follows:
\begin{align*}
 \frac{\partial}{\partial a}(\overline{f}^n_{a,c}(0)) &\equiv \frac{\partial}{\partial a}\left(as (\overline{f}^{n-1}_{a,c}(0))^{t p} + c\right) \\
&\equiv s (\overline{f}^{n-1}_{a,c}(0))^{t p} + as t p (\overline{f}^{n-1}_{a,c}(0))^{t p - 1}\frac{\partial}{\partial a}\left(\overline{f}_{a,c}^{m-1}(0) \right) \\
&\equiv s (\overline{f}^{n-1}_{a,c}(0))^{t p} \pmod{p}\\\\
    \frac{\partial}{\partial c} \left(\overline{f}^n_{a,c}(0)\right) &\equiv \frac{\partial}{\partial c} \left(as (\overline{f}^{n-1}_{a,c}(0))^{t p} + c\right)\\
&\equiv a s t p (\overline{f}^{n-1}_{a,c}(0))^{t p - 1}\frac{\partial}{\partial c}\left(\overline{f}^{n-1}_{a,c}(0) \right) + 1\\
&\equiv 1 \pmod{p}.
\end{align*}
Similarly,
\begin{align*}
\frac{\partial}{\partial a} \left(\overline{f}^m_{a,c}(1)\right) &\equiv s (\overline{f}^{m-1}_{a,c}(1))^{t p} \pmod{p}\\
\frac{\partial}{\partial c} \left(\overline{f}^m_{a,c}(1)\right) &\equiv 1 \pmod{p}.
\end{align*}
Thus, the Jacobian is given by
\begin{align*}
    J(a,c) &\equiv \det
    \begin{pmatrix}
        1 & 1 \\
        s (\overline{f}^{n-1}_{a,c}(0))^{t p} & s (\overline{f}^{n-1}_{a,c}(1))^{t p}\\
    \end{pmatrix} \\
    &\equiv s(\overline{f}^{m-1}_{a,c}(1)^{t p} - \overline{f}^{m-1}_{a,c}(0)^{t p}) \pmod{p}.
\end{align*}

Now we evaluate at a point of intersection $(\alpha, \beta)$, which is a solution to the equations $F_n(a,c) = G_m(a,c) = 0$. Denote the reductions of $\alpha$ and $\beta$ modulo $p$ by $\overline{\alpha}$ and $\overline{\beta}$. By Proposition ~\ref{prop:pintegral} both $\overline{\alpha}$ and $\overline{\beta}$ are defined, and by Lemma ~\ref{lem:v(a)eq0} $\overline{\alpha}$ is non-zero. Since $f^m_{\alpha, \beta}(1) - 1$ equals $0$, we have that $\overline{\alpha} s(f^{m-1}_{\alpha,\beta}(1))^{t p} + \overline{\beta}$ must be equivalent to $1$, so that
\begin{equation*}
    (f^{m-1}_{\alpha, \beta}(1))^{t p} = \frac{1-\overline{\beta}}{\overline{\alpha} s}
\end{equation*}
and, similarly,
\begin{equation*}
    (f^{n-1}_{\alpha, \beta}(0))^{t p} = -\frac{\overline{\beta}}{\overline{\alpha} s}.
\end{equation*}
It follows that
\begin{align*}
    J(\alpha, \beta) &\equiv s(f^{m-1}_{\alpha,\beta}(1)^{t p} - f^{n-1}_{\alpha, \beta}(0)^{t p})\\
    &\equiv s \left(\frac{1-\overline{\beta}}{\overline{\alpha} s} + \frac{\overline{\beta}}{\overline{\alpha} s}\right)\\
    &\equiv \frac{1}{\overline{\alpha}} \pmod{p}.
\end{align*}
As $\overline{\alpha}$ is non-zero by Lemma \ref{lem:v(a)eq0}, $J(\alpha, \beta)$ is defined, and
$$
J(\alpha, \beta) \not\equiv 0 \pmod{p}.
$$
\end{proof}

We are now ready to prove the main theorem. Our proof is identical to the one given in \cite{Silverman-transversality}, however we reproduce it here for the sake of completeness.

\begin{proof} (Main Theorem)
Begin by considering the Jacobian $J(a,c)$ evaluated at a point of intersection $(\alpha, \beta)$ of $F_n(a,c) = G_m(a,c) = 0$. By Proposition ~\ref{prop:Jacobianneq0}, there exists some number $K(\alpha, \beta)$ such that
\begin{equation*}
    J(\alpha, \beta) = \frac{1}{\overline{\alpha}} + p K(\alpha, \beta).
\end{equation*}
By taking norms down to $\mathbb{Q}$, we find that
\begin{align*}
    \textbf{N}_{\mathbb{Q}(\alpha, \beta) / \mathbb{Q}} J(\alpha, \beta) = \textbf{N}_{\mathbb{Q}(\alpha, \beta) / \mathbb{Q}} \left(\frac{1}{\overline{\alpha}} + p K(\alpha, \beta)\right) \equiv \frac{1}{\overline{\alpha}} \pmod{p}.
\end{align*}
In particular, $J(\alpha, \beta)$ is non-zero. It follows that
\begin{align*}
    (F_n, G_m, J) \subset \mathbb{C}[a,c]
\end{align*}
is the unit ideal, since if it were not, the curves $F_n$, $G_m$, and $J$ would have a common root.
\end{proof}

\section{Failure to Extend to $n$ critical points}\label{n-critical-points}

One possible way to extend the above results would be to construct a normal form similar to the Belyi normal form but for more critical points, and try to push through similar proofs. The following shows, however, that the natural generalization to $n$ critical points does not allow for proofs following the standard mode of attack, as the Jacobian is not non-zero modulo $p$.

In order to attempt to generalize to more critical points, we first need a normal form. We give a generalization to $n$-critical points of the normal form for bicritical maps given in Tobin \cite{Bella}. First, we define some notation.
\begin{defn}
We notate the $n+1$ nested sums as
\begin{align*}
    \sum_{j_0\ldots j_n = 0}^{k_0\ldots k_n} \vcentcolon = \sum_{j_0= 0}^{k_0}\sum_{j_1=0}^{k_1}\ldots \sum_{j_n=0}^{k_n}
\end{align*}
\end{defn}
For the following proposition, any sum with lower and upper bounds omitted is a sum from $0$ to $n-2$. That is,
\begin{align*}
    \sum_{l} \vcentcolon = \sum_{l = 0}^{n-2}
\end{align*}

\begin{thm}
\label{n-critical normal form}
Let g $\in K[z]$ be a degree $d$ polynomial with $n$ critical points. There exist
$k_0, \ldots k_{n-2} \in \mathbb{N}$, with $n-1 \leq \displaystyle{\sum_l} k_l \leq d-2$ %this inequality might be irrelevant. also, check this inequality
and $a,c, \gamma_0 \ldots \gamma_{n-2} \in \overline{K}$ such that $g$ is conjugate to
$$
a \left( \frac{d!}{(d - \sum_l k_l - 1)!} \sum_{j_0 \ldots j_{n-2} = 0} ^ {k_0 \ldots k_{n-2}} \left( \prod_{i = 0}^{n-2} (-\gamma_i)^{k_i-j_i}  \binom{k_i}{j_i} \right) \frac{z^{d+\sum_l j_l - k_l}}{d + \sum_l j_l-k_l} \right) + c.
$$
\end{thm}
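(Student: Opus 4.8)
The plan is to reduce the statement to a single derivative computation: I will show that the displayed normal form is precisely the family whose derivative factors as a prescribed product of powers of linear forms, and then realize an arbitrary $g$ as a member of this family after an affine change of coordinates. Throughout write $K = \sum_{l=0}^{n-2} k_l$ and abbreviate the normal form as $g = aF + c$, where $F(z)$ is the bracketed sum.

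The first and central step is to compute $F'$. Reindexing the joint sum by $q_i = k_i - j_i$, the exponent $d + \sum_l (j_l - k_l)$ becomes $d - \sum_l q_l$, and differentiating the factor $\frac{1}{d-\sum_l q_l}$ cancels this exponent. Pulling out $z^{d-1-K}$ and using distributivity to turn the single sum over $(q_0,\dots,q_{n-2})$ into a product of one-variable sums, the binomial theorem applied in each index gives
\begin{equation*}
F'(z) = \frac{d!}{(d-K-1)!}\, z^{\,d-1-K}\prod_{i=0}^{n-2}(z-\gamma_i)^{k_i}.
\end{equation*}
This identity is the heart of the argument: it shows that $g = aF + c$ has exactly the $n$ affine critical points $0,\gamma_0,\dots,\gamma_{n-2}$, with ramification index $d-K$ at $0$ and $k_i+1$ at each $\gamma_i$. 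Moreover the lowest-degree monomial of $F$ has exponent $d-K \geq 2$, so $F(0)=0$ and hence $c = g(0)$; and the top monomial shows $F$ has leading coefficient $\frac{(d-1)!}{(d-K-1)!}$.

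For the converse direction I start with an arbitrary degree-$d$ polynomial $g$ having $n$ distinct affine critical points $w_0,\dots,w_{n-1}$. Its derivative has degree $d-1$, so I may write $g'(z) = d\,b\prod_{i=0}^{n-1}(z-w_i)^{\mu_i}$, where $b$ is the leading coefficient of $g$, each $\mu_i \geq 1$, and $\sum_i \mu_i = d-1$. Conjugating by the translation $\phi(z) = z + w_0$ replaces $g$ by $\tilde g(z) = g(z+w_0) - w_0$, which has the same leading coefficient and has critical points at $0$ and at $\gamma_i := w_{i+1}-w_0$ for $0 \leq i \leq n-2$. Setting $k_i := \mu_{i+1}$ gives $K = \sum_{i=1}^{n-1}\mu_i = (d-1)-\mu_0$, so the multiplicity of the critical point at $0$ is exactly $d-1-K$. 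Comparing factorizations, $\tilde g'$ and the polynomial $F'$ built from these $k_i,\gamma_i$ agree up to the scalar $\frac{d\,b\,(d-K-1)!}{d!}$, whence $\tilde g = aF + c$ with $a = b\,(d-K-1)!/(d-1)!$ and $c = \tilde g(0)$. The bounds follow immediately: each $\mu_i \geq 1$ forces $K = \sum_{i=1}^{n-1}\mu_i \geq n-1$, while $\mu_0 \geq 1$ forces $K \leq d-2$.

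The only genuine content is the binomial identity for $F'$; once it is established, the normal form is just a repackaging of the factored derivative together with the normalizations $F(0)=0$ and the prescribed leading coefficient. I expect the main bookkeeping obstacle to be handling the multi-index sum cleanly—in particular justifying the factorization of the joint sum into a product of one-variable binomial expansions and tracking the normalizing constant $d!/(d-K-1)!$ through differentiation. A minor point worth recording is that the choice of which critical point is sent to $0$ is free (any $w_i$ works, since every $\mu_i \geq 1$), so the normal form is not unique; this mirrors the conjugacy freedom already visible in the bicritical case of Lemma \ref{conjugacy}.
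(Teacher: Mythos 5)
Your proof is correct and follows essentially the same route as the paper's: both arguments rest on factoring the derivative as $\alpha\, z^{\,d-1-\sum_l k_l}\prod_i (z-\gamma_i)^{k_i}$ using the critical points and ramification indices, linking the product form to the sum form via the binomial theorem, and recovering the polynomial as an antiderivative plus the constant $c$. The differences are cosmetic—you differentiate the normal form and then match factorizations, where the paper integrates the factored derivative term by term, and you translate only one critical point to $0$ while the paper also sends a second one to $1$—but the theorem only asserts existence of the $\gamma_i$, so both normalizations suffice.
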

\begin{proof}
Let $g \in K[z]$ be a polynomial with $n$ critical points $\xi_0, \ldots, \xi_{n-1}$. Let $\phi(z) = \frac{z-\xi_{n-1}}{\xi_{0}-\xi_{n-1}} \in \text{PGL}_2(\overline{K})$, which sends the critical points $\xi_0$ and $\xi_{n-1}$ to
1 and 0, respectively. Then, $f(z) = g^{\phi^{-1}}$ has critical points $0, \gamma_0 = 1, \gamma_1, \ldots, \gamma_{n-2}$. Let $d - \sum_ik_i$ be
the ramification index of $0$ and let $k_0+1, \ldots, k_{n-2}+1$ be the ramification indices of $\gamma_0, \ldots, \gamma_{n-2}$, respectively. Then there exists $\alpha \in \overline{K}$ such that
\begin{equation*}
    f'(z) = \alpha z^{d-\sum_l k_l-1}\prod_{i=0}^{n-2}(z-\gamma_i)^{k_i}
\end{equation*}
\begin{align*}
    f(z)
    %&= \int \alpha z^{d-\sum_ik_i-1}\prod_{i=0}^{n-2}(z-\gamma_i)^{k_i} \, dz
    %&= \alpha \int z^{d-\sum_i k_i-1}\prod_{i=0}^{n-2}\sum_{j_i=0}^{k_i}(-\gamma_i)^{k_i-j_i}\binom{k_i}{j_i}z^{j_i} \, dz\\ %this one seems wrong (up to constant multiple)
    &= \alpha \int z^{d-\sum_l k_l -1}\sum_{j_0\ldots j_{n-2} = 0}^{k_0\ldots k_{n-2}} \left(\prod_{i=0}^{n-2} (-\gamma_i)^{k_i-j_i}\binom{k_i}{j_i}z^{j_i} \right) \, dz\\
    &= \alpha \int z^{d-\sum_l k_l-1}\sum_{j_0\ldots j_{n-2} = 0}^{k_0\ldots k_{n-2}}z^{\sum_l j_l}\left[\prod_{i=0}^{n-2}(-\gamma_i)^{k_i-j_i}\binom{k_i}{j_i}\right] \, dz\\
    &= \alpha \sum_{j_0\ldots j_{n-2} = 0}^{k_0\ldots k_{n-2}} \left[\prod_{i=0}^{n-2}(-\gamma_i)^{k_i-j_i}\binom{k_i}{j_i} \right] \int z^{d+\sum_l j_l-k_l-1} \, dz\\
    &= \alpha \left( \sum_{j_0\ldots j_{n-2} = 0}^{k_0\ldots k_{n-2}} \left[\prod_{i=0}^{n-2}(-\gamma_i)^{k_i-j_i}\binom{k_i}{j_i} \right]  \frac{z^{d+\sum_l j_l-k_l}}{d+\sum_l j_l-k_l} \right)+ c.
\end{align*}
We can then make the substitution
\begin{equation*}
    \alpha = a \left(\frac{d!}{(d - \sum_l k_l - 1)!} \right)
\end{equation*}
so that
\begin{equation*}
    f(z) = a \left(\frac{d!}{(d - \sum_l k_l - 1)!} \sum_{j_0\ldots j_{n-2} = 0}^{k_0\ldots k_{n-2}} \left[ \prod_{i=0}^{n-2}(-\gamma_i)^{k_i-j_i}\binom{k_i}{j_i} \right]  \frac{z^{d+\sum_l j_l-k_l}}{d+\sum_l j_l-k_l}\right) + c.
\end{equation*}
Additionally, since $k_0+1, \ldots k_{n-2}+1$ are the ramification indices of the critical points, for all $l$ we have $k_l + 1 \geq 2$, and hence $k_l \geq 1$.
Since $d - \sum_l k_l$ is the ramification index of 0,
%$$d - \sum_i k_i \geq 2$$
\begin{equation*}
    \sum_l k_l \leq d-2 .
\end{equation*}
Combining these two inequalities, we have
\begin{equation*}
    n-1 \leq \sum_l k_l \leq d-2. \qedhere
\end{equation*}
\end{proof}

Applying Theorem \ref{n-critical normal form} to a particular case leads to a specific normal form as demonstrated in the next example.
\begin{ex}
\label{n-critical normal form example 1}
%\textcolor{blue}{Consider the family of degree $4$ polynomials with three critical points, $\gamma_0'$, $\gamma_1'$, and $\gamma_2'$ all of ramification index $2$. Theorem \ref{n-critical normal form} states that this family is conjugate to a family $f_{a,c,\gamma}$ of the form given in Theorem \ref{n-critical normal form}. Note that the proof of Theorem \ref{n-critical normal form} conjugates so that $\gamma_0'$ is sent to $\gamma_0 = 1$, $\gamma_1'$ is sent to $\gamma_1$, and $\gamma_2'$ is sent to $0$. The critical points of $g_{a,c,\gamma}$ are thus $0$, $1$, and $\gamma_1$, which we relabel as $\gamma$ for clarity.}

Consider a degree 4 polynomial with 3 critical points, $\gamma_0, \gamma_1, \gamma_2$, each of ramification index 2.
%Let $f_{a,c,\gamma}(z)$ be of degree 4 with 3 critical points, $\gamma_0, \gamma_1, \gamma_2$, each of ramification index 2.
Note that the proof of Theorem \ref{n-critical normal form} shows that we can conjugate so that $\gamma_0 = 1$ and $\gamma_2 = 0$. For clarity, we relabel $\gamma_1 = \gamma$. Label the normal form given for this polynomial from Theorem \ref{n-critical normal form} as $f_{a,c,\gamma}(z)$. The proof of Theorem
\ref{n-critical normal form} also shows that $k_i = e_{\gamma_i}(f_{a,c,\gamma}) - 1$, where $e_{\gamma_i}(f_{a,c,\gamma})$ is the ramification index of $f_{a,c,\gamma}$ at
$\gamma_i$. Hence, we have that $k_0 = e_{\gamma_0}(f_{a,c,\gamma}) - 1 = 1$ and $k_1 = e_{\gamma_1}(f_{a,c,\gamma}) - 1 = 1$. Now, we substitute with $d = 4$, $n = 3$, $k_0 = 1$, $k_1 = 1$, $\gamma_0 = 1$, and $\gamma_1 = \gamma$.
\begin{align*}
    f_{a,c,\gamma}(z) = a \left( \frac{4!}{4-\sum_l k_l-1!} \sum_{j_0, j_1 = 0} ^ {1, 1} \left[ \prod_{i = 0}^{1} (-\gamma_i)^{k_i-j_i} \binom{k_i}{j_i} \right] \frac{z^{4+\sum_{l=0}^{1}j_l - \sum_{l=0}^1 k_l}}{4 + \sum_{l = 0}^{1}j_l - \sum_{l = 0}^1 k_l} \right) + c.
\end{align*}
Since $k_i = 1$ for all $i$, and $\sum_l k_l = 2$, it follows that
\begin{align*}
    f_{a,c,\gamma}(z) = a \left( 4! \sum_{j_0, j_1 = 0} ^ {1, 1} \left[ \prod_{i = 0}^{1} (-\gamma_i)^{1-j_i} \binom{1}{j_i} \right] \frac{z^{2+j_0+j_1}}{2 + j_0 + j_1} \right) + c.
\end{align*}
We can bring the $4!$ into the sum and use the fact that $j_i \leq k_i = 1$ to simplify $\frac{4!}{2+j_0+j_1}$ as $\prod_{l = 0, \, l \neq j_0+j_1}^2 2+l$:
\begin{align*}
    f_{a,c,\gamma} &= a \left(\sum_{j_0, j_1 = 0} ^ {1, 1} (-1)^{1-j_0}(-\gamma)^{1-j_1} \left[\prod_{l = 0, \, l \neq j_0+j_1} ^{2} 2+l\right]z^{2+j_0+j_1} \right) + c\\
    &= a \sum_{j_0 = 0}^{1} \left((-1)^{1-j_0}(-\gamma)\left[\prod_{l = 0, \, l \neq j_0}^{2} 2+l\right]z^{2+j_0} + (-1)^{1-j_0}\left[\prod_{l = 0, \, l \neq j_0+1}^{2} 2+l\right]z^{3+j_0} \right) + c \\
    &= a\left(\gamma(3\cdot 4)z^2-(2\cdot 4)z^3-\gamma(2\cdot 4)x^3+(2\cdot 3)z^4 \right)\\
    &= a(6z^4 - 8(1+\gamma)z^3 + 12\gamma z^2)+c.
\end{align*}
\end{ex}

Using the normal form in Theorem \ref{n-critical normal form}, we might hope to provide algebraic proofs of
transversality for polynomials with 3 or more critical points. Unfortunately, the next two examples show that choosing $p$ as in the bicritical case ultimately fails.

Example \ref{exmp_1} shows the importance of the results on conjugacy from \cite{Bella} in the bicritical case. For polynomials with 3 or more critical points, we can no longer assume that $1 \leq k_i \leq \lceil \frac{d-2}{2} \rceil$, which means
%I can prove that 1 <= k_{n-2} <= ceiling((d-2)/2), but its not particularly interesting. it can be done by relabeling the critical points - Alex
we cannot always find a prime for which the polynomial reduces nicely.

\begin{ex} \label{exmp_1}
Consider the family of degree $10$ polynomials with three critical points, $\gamma_0'$, $\gamma_1'$, and $\gamma_2'$ of ramification indices $8$, $2$, and $2$ respectively. Theorem \ref{n-critical normal form} states that we can parametrize this family by polynomials $g_{a,c,\gamma}$ of the form given in Theorem \ref{n-critical normal form}. Note that the proof
of Theorem \ref{n-critical normal form} conjugates so that $\gamma_0'$ is sent to $1$, $\gamma_1'$ is sent to $\gamma$, and $\gamma_2'$ is sent to $0$. The critical points of $g_{a,c,\gamma}$ are thus $0$, $1$, and
$\gamma$. Also note that $k_i = e_{\gamma_i}(g_{a,c,\gamma})-1$,
hence $k_0 = 7$ and $k_1 = 1$. Substituting $d=10$, $n=3$, $k_0 = 7$, $k_1 = 1$, $\gamma_0 = 1$, and $\gamma_1 = \gamma$, we have
\begin{align*}
    g_{a,c,\gamma}(z) &= a \left( \frac{10!}{1!} \sum_{j_0, j_1 = 0} ^ {7, 1} \left[\prod_{i = 0}^{1} (-\gamma_i)^{k_i-j_i} \binom{k_i}{j_i} \right] \frac{z^{2+j_0+j_1}}{2 + j_0+j_1} \right) + c\\
    &= a \left(\sum_{j_0, j_1 = 0} ^ {7, 1} \left[\prod_{i = 0}^{1} (-\gamma_i)^{k_i-j_i}\binom{k_i}{j_i} \right]\left(\prod_{l = 0, l \neq j_0+j_1} ^{8} 2+l\right)z^{2+j_0+j_1} \right) + c.
\end{align*}
Note that the product
\begin{equation*}
    \prod_{l = 0, l \neq j_0+j_1} ^{8} 2+l
\end{equation*}
will always be nonzero modulo any prime $p > 10$ and will always be 0 modulo any prime $p \leq 5$. We must then try to reduce
$g_{a,c,\gamma}$ modulo $p = 7$. Values of $j_0$ and $j_1$ that will make the previous product nonzero modulo 7 are those such that
$j_0+j_1 = 5$, which results in two solutions, $j_0 = 5$ and $j_1 = 0$ or $j_0 = 4$ and $j_1 = 1$. In both of these cases,
however, we have
\begin{equation*}
    7 \mid \binom{k_0}{j_0}
\end{equation*}
as $\binom{k_0}{j_0} = \binom{7}{5}$ or $\binom{7}{4}$, giving
\begin{equation*}
    g_{a,c,\gamma} \equiv c \pmod{7}.
\end{equation*}
Hence there is no prime for which the reduction $\overline{g}_{a,c,\gamma}$ is useful for proving transversality.
\end{ex}
If we assume that $1 \leq k_i \leq \left\lceil \frac{d-2}{2} \right\rceil$, then we can apply Sylvester's
 theorem \cite{Sylvester} and the Bertrand-Chebyshev \cite{Chebyshev} theorem to guarantee a prime for which the reduction is sufficiently nice. However, as Example \ref{exmp_2} shows, this is not enough to be able to prove transversality.

\begin{ex}\label{exmp_2}
Consider the family $f_{a,c,\gamma}$ from \hyperref[n-critical normal form example 1]{Example \ref*{n-critical normal form example 1}}.
\begin{align*}
    f_{a,c,\gamma}(z) &= a(6z^4 - 8(1+\gamma)z^3 + 12\gamma z^2)+c.
\end{align*}
Clearly, we must reduce by $p = 3$ to get
\begin{equation*}
    f_{a,c,\gamma} \equiv a(1+\gamma)x^3+c \pmod{3}.
\end{equation*}
We compute the Jacobian $J(a,c,\gamma)$ as
\begin{align*}
    J(a,c,\gamma) &= \det \begin{pmatrix}
1 & 1 & 1\\
(1+\gamma)(f^{m-1}(0))^3 & (1+\gamma)(f^{n-1}(1))^3 & (1+\gamma)(f^{k-1}(\gamma))^3\\
a(f^{m-1}(0))^3 & a(f^{n-1}(1))^3 & a(f^{k-1}(\gamma))^3 \\
\end{pmatrix}\\
&= a(1+\gamma)\det \begin{pmatrix}
1 & 1 & 1\\
(f^{m-1}(0))^3 & (f^{n-1}(1))^3 & (f^{k-1}(\gamma))^3\\
(f^{m-1}(0))^3 & (f^{n-1}(1))^3 & (f^{k-1}(\gamma))^3 \\
\end{pmatrix},
\end{align*}
which is zero as the second and third rows are equal.
\end{ex}

As the above example shows, we can not prove transversality algebraically with this normal form. We do, however, wonder if the PCF solutions for the $n$-critical normal form are $p$-adically integral.

\bibliographystyle{plain}

\end{document}